\theoremstyle{plain}
\newtheorem{thm}{Theorem}[section]
\newtheorem{prop}[thm]{Proposition}
\newtheorem{lem}[thm]{Lemma}
\newtheorem{cor}[thm]{Corollary}
\theoremstyle{definition}
\theoremstyle{remark}
\newtheorem{remark}{Remark}
\newtheorem{problem}{Problem}
\def\Aut{\textnormal{Aut}}
\def\cc{{\curvearrowright}}
\def\CInd{\textnormal{CInd}}
\def\E{{\mathbb E}}
\def\F{{\mathbb F}}
\def\cH{{\mathcal H}}
\def\mc{\mathcal}
\def\N{{\mathbb N}}
\def\R{{\mathbb R}}
\def\chix{{\raise.5ex\hbox{$\chi$}}}
\def\Z{{\mathbb Z}}
\newcommand{\ra}{\rightarrow}
\newcommand{\csuchthat}{ :~ }
\begin{document}
\title{On a co-induction question of Kechris }
\author{Lewis Bowen\footnote{email:lpbowen@math.tamu.edu} \\ Texas A\&M University \\ $\ $ \\ Robin D. Tucker-Drob\footnote{email:rtuckerd@caltech.edu} \\ Caltech}
\begin{abstract}
This note answers a question of Kechris:  if $H<G$ is a normal subgroup of a countable group $G$, $H$ has property MD and $G/H$ is amenable and residually finite then $G$ also has property MD. Under the same hypothesis we prove that for any action $a$ of $G$, if $b$ is a free action of $G/H$, and $b_G$ is the induced action of $G$ then $\CInd_H^G(a|H) \times b_G$ weakly contains $a$.  Moreover,  if $H<G$ is any subgroup of a countable group $G$, and the action of $G$ on $G/H$ is amenable, then $\CInd_H^G(a|H)$ weakly contains $a$ whenever $a$ is a Gaussian action.
\end{abstract}
\maketitle
\noindent
{\bf Keywords}: co-induction, weak containment, MD, Gaussian action\\
{\bf MSC}:37A35\\

\noindent

\section{Introduction}\label{sec:intro}

The Rohlin Lemma plays a prominent role in classical ergodic theory. Roughly speaking, it states that any aperiodic automorphism $T$ of a standard non-atomic probability space $(X,\mu)$ can be approximated by periodic automorphisms. In \cite{OW80}, Ornstein and Weiss generalized the Rohlin Lemma to actions of amenable groups and used it to extend many classical ergodic theory results (such as Ornstein theory) to the amenable setting. 

There is no analogue of the Rohlin Lemma for non-amenable groups. However, one can hope to understand more precisely how and why this is so. The concept of ``weak containment'' of actions, introduced by A. Kechris \cite{Ke10}, is a natural starting point. To be precise, let $(X,\mu)$, $(Y,\nu)$ be standard non-atomic probability spaces. Let $G \cc^a (X,\mu)$, $G \cc^b (Y,\nu)$ be probability-measure-preserving (p.m.p.) actions. An {\em observable} $\phi$ for $a$ is a measurable map $\phi:X \to \N$. For $F \subset G$, let $\phi_a^F:X \to \N^F=\{y:F \to \N \}$ be defined by
$$\phi_a^F(x)(f)=\phi(a(f)x).$$
Then $a$ is said to be {\em weakly contained} in $b$ (denoted $a \prec b$) if for every $\epsilon>0$, every finite $F \subset G$, every observable $\phi$ for $a$, there is an observable $\psi$ for $b$ such that
$$\| \phi^F_*\mu - \psi^F_*\nu\|_1 \le \epsilon.$$
The two actions are {\em weakly equivalent} if $a \prec b$ and $b \prec a$. 

If $G$ is infinite and amenable, then as remarked in \cite{Ke11}, if $a$ is a free action then $a$ weakly contains every action of $G$. This is essentially equivalent to the Rohlin Lemma for amenable groups. However, when $G$ is non-amenable then it may possess uncountably many free non-weakly equivalent actions \cite{AE11}. It is unknown whether the same holds true for every non-amenable group.

It is natural to ask how weak equivalence behaves with respect to operations such as co-induction. To be precise, let $H<G$ be a subgroup. Let $H \cc^a (X,\mu)$ be a p.m.p. action. Let $Z=\{ z \in X^G:~ a(h^{-1})z(g) = z(gh)~\forall h\in H, g\in G\}$. Let $G \cc^b Z$ be the action $(b(g)z)(f)=z(g^{-1}f)$ for $g,f\in G, z \in Z$. 

A {\em section} of $H$ in $G$ is a map $\sigma:G/H \to G$ such that $\sigma(gH) \subset gH$ for every $g\in G$. Let us assume $\sigma(H)=e$. Define $\Phi:Z \to X^{G/H}$ by $\phi(z)(gH) = z(\sigma(gH))$. This is a bijection. Define a measure $\zeta$ on $Z$ by pulling back the product measure $\mu^{G/H}$ on $X^{G/H}$. Then $G \cc^b (Z,\zeta)$ is probability-measure-preserving. This action is called said to be {\em co-induced} from $a$ and is denoted $b=\CInd_H^G(a)$.

Problem A.4. of \cite{Ke11} asks the following.
\begin{problem}
Let $G$ be a countable group with a subgroup $H<G$. Suppose the action of $G$ on $G/H$ is amenable. Is it true that for any p.m.p. action $a$ of $G$ on a standard non-atomic probability space, the co-induced action $\CInd_H^G(a|H)$ weakly contains $a$?
\end{problem}
A positive answer can be interpreted as providing a relative version of the Rohlin lemma. Note that the action of $G$ on $G/H$ being amenable is a necessary condition, since if we take $a$ to be the trivial action $\tau _G$ of $G$ on a standard non-atomic probability space $(X,\mu )$, then $\mbox{CInd}_H^G (\tau _G|H)$ is isomorphic to the generalized Bernoulli shift action $s_{G,G/H ,X}$ of $G$ on $X^{G/H}$ (see section \ref{sec:conseq}), and $s_{G,G/H,X}$ weakly containing $\tau _G$ is equivalent to the action of $G$ on $G/H$ being amenable by \cite{KT08}. Also note that if replace the actions with unitary representations, then the analogous problem is known to have a positive answer (this is E.2.6 of \cite{BdlHV08}).

Our main results solve Problem 1 in a number of cases and provide applications to property MD. To begin, we prove:
\begin{thm}\label{thm:main} 
Let $G$ be a countable group with normal subgroup $H$. Suppose that $G/H$ is amenable and that $|G/H|=\infty$. Let $b$ be any free p.m.p. action of $G/H$. Let $b_G$ be the associated action of $G$ (i.e., $b_G$ is obtained by pre-composing $b$ with the quotient map $G \to G/H$). Then for any p.m.p. action $a$ of $G$ on standard non-atomic probability space, the product action $\CInd_H^G(a|H) \times b_G$ weakly contains $a$.

\end{thm}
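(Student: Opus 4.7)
Fix a finite $F\subset G$, an observable $\phi$ of $a$ with finite range, and $\epsilon>0$. The goal is to construct an observable $\Psi$ of $c:=\CInd_H^G(a|H)\times b_G$ whose $F$-pushforward is within $O(\epsilon)$ of $\phi_a^F{}_*\mu$. My strategy is to use the amenability of $G/H$ and the freeness of $b$ to locate a ``position'' $\pi(y)\in G/H$ via a Rohlin tower in the $b_G$-factor, and then read off a $\sigma$-corrected coordinate of the co-induced factor in such a way that the cocycle appearing in the co-induced shift collapses into the genuine $G$-action $a$.

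Fix a Borel section $\sigma:G/H\to G$ with $\sigma(H)=e$, and identify $\CInd_H^G(a|H)$ with the action of $G$ on $(X^{G/H},\mu^{G/H})$ given by $(g\cdot\tilde z)(\bar k)=a(h(g,\bar k)^{-1})\tilde z(g^{-1}\bar k)$, where $h(g,\bar k):=\sigma(g^{-1}\bar k)^{-1}g^{-1}\sigma(\bar k)\in H$. Set $\bar F:=\{fH\csuchthat f\in F\}$. Since $G/H$ is infinite, amenable, and $b$ is free, the Ornstein--Weiss Rohlin lemma provides a finite $(\bar F,\epsilon)$-Folner set $\bar S\subset G/H$ and a Borel set $T\subset Y$ whose levels $\{b(\bar s)T:\bar s\in\bar S\}$ are pairwise disjoint with total measure $\ge 1-\epsilon$. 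Define $\pi:Y\to\bar S\cup\{*\}$ by $\pi(y)=\bar j$ when $y\in b(\bar j)T$ and $\pi(y)=*$ otherwise, and set $\Psi(\tilde z,y):=\phi(a(\sigma(\pi(y)))\,\tilde z(\pi(y)))$ when $\pi(y)\in\bar S$ and $\Psi(\tilde z,y):=0$ otherwise.

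The heart of the argument is the cocycle identity $\sigma(\bar f\bar j)\,h(f,\bar f\bar j)^{-1}=f\,\sigma(\bar j)$, which is immediate from $f^{-1}\bar f\bar j=\bar j$ and the definition of $h$. Combined with $(f\cdot\tilde z)(\bar f\bar j)=a(h(f,\bar f\bar j)^{-1})\tilde z(\bar j)$ and $\pi(b_G(f)y)=\bar f\pi(y)$ (valid whenever $\bar f\pi(y)\in\bar S$), this yields $\Psi(c(f)(\tilde z,y))=\phi(a(f)\,x)$ with $x:=a(\sigma(\bar j))\tilde z(\bar j)$, for every $f\in F$, on the event $E:=\{\pi(y)\in\bar S^{\mathrm{good}}\}$, where $\bar S^{\mathrm{good}}:=\{\bar j\in\bar S:\bar F\bar j\subset\bar S\}$. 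Under $\eta:=\mu^{G/H}\times\nu$ conditioned on $E$, the coset $\bar j=\pi(y)$ is independent of the coordinate $\tilde z(\bar j)\sim\mu$, so $x\sim\mu$; hence $(\Psi_c^F)_*(\eta|_E)=\eta(E)\cdot\phi_a^F{}_*\mu$. The Folner property gives $|\bar S\setminus\bar S^{\mathrm{good}}|/|\bar S|=O(\epsilon)$, which combined with the tower estimate yields $\eta(E)\ge 1-O(\epsilon)$; therefore $\|(\Psi_c^F)_*\eta-\phi_a^F{}_*\mu\|_1=O(\epsilon)$. The essential content is the choice of $\Psi$: the correction factor $a(\sigma(\pi(y)))$ is exactly what absorbs the cocycle $h(f,\bar f\bar j)$ and converts shifts in the coset space into the honest action $a(f)$ on a $\mu$-distributed variable; everything else is routine Ornstein--Weiss bookkeeping, and freeness of $b$ is used only to invoke the Rohlin lemma.
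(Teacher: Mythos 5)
Your proof is essentially the paper's: your position map $\pi$ is the paper's $J$ (produced by its tiling theorem, Theorem \ref{thm:tiling}), and your observable $\Psi(\tilde z,y)=\phi\bigl(a(\sigma(\pi(y)))\,\tilde z(\pi(y))\bigr)$ is exactly the paper's $\psi(z,y)=\phi(S_y(z))$ with $S_y(z)=a(\tilde J(y))z(\tilde J(y))$, $\tilde J=\sigma\circ J$; the cocycle-absorption identity and the distributional argument are the same. The one inaccuracy is your appeal to a \emph{single-tower} Rokhlin lemma --- one F\o lner shape $\bar S$ whose translates of a single base $T$ are pairwise disjoint and cover $1-\epsilon$ --- whereas for a general countable amenable group the Ornstein--Weiss/Ollagnier lemma only supplies a finite family of tower shapes $\{(\Lambda_i,A_i)\}_{i\in I}$, and the single-shape version is not a standard available result. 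This costs you nothing: your argument uses only the position function $\pi$ and the approximate equivariance $\pi(b_G(f)y)=\bar f\,\pi(y)$ off a set of measure $O(\epsilon)$, and the multi-tower version delivers that verbatim (define $\pi(y)=\bar s$ when $y\in b(\bar s)A_i$), which is precisely how the paper packages it.
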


Taking $b$ to be the Bernoulli shift action of $G/H$ over a standard non-atomic probability base space, we show that Theorem \ref{thm:main} implies (see \ref{cor:cind} below)
\[
a\prec \mbox{CInd}_H^G ((a\times \tau _G)|H)
\]
where $\tau _G$ is the trivial action of $G$ as above.  In particular, if $a|H$ weakly contains $(a\times \tau _G)|H$, then $\CInd_H^G(a|H)$ weakly contains $a$.  For instance, by \cite{AW11} this is the case whenever $a$ is an ergodic p.m.p. action of $G$ that is not strongly ergodic.  This also holds when $a$ is a \emph{universal} action of $G$, i.e., $b\prec a$ for every p.m.p. action $b$ of $G$. That such actions exist for every countable group $G$ is due to Glasner-Thouvenot-Weiss \cite{GTW06} and, independently, to Hjorth (unpublished, see 10.7 of \cite{Ke10}). This has the following consequence:

\begin{thm}\label{thm:univ}
Let $G$ and $H$ be as in Theorem \ref{thm:main}. If $b$ is a universal action of $H$ then $\CInd_H^G(b)$ is a universal action of $G$.
\end{thm}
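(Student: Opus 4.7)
The plan is to deduce Theorem \ref{thm:univ} as a short corollary of Theorem \ref{thm:main} (or rather, of the consequence \ref{cor:cind}) by using universality of $b$ at the level of $H$-actions and then invoking monotonicity of co-induction under weak containment.

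Let $a$ be an arbitrary p.m.p. action of $G$ on a standard non-atomic probability space; the goal is to show $a\prec \CInd_H^G(b)$. First I would appeal to the corollary \ref{cor:cind} announced just before the statement, which says that
\[
a\prec \CInd_H^G\bigl((a\times \tau_G)|H\bigr).
\]
This is the place where Theorem \ref{thm:main} does all the real work: we are already reducing the universality question to a statement purely about the co-induction of a specific $H$-action.

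Next, since $(a\times \tau_G)|H$ is some p.m.p. action of $H$ and $b$ is universal for $H$, universality gives
\[
(a\times \tau_G)|H \prec b.
\]
The key remaining step is monotonicity of co-induction with respect to weak containment: if $c_1,c_2$ are p.m.p. actions of $H$ with $c_1\prec c_2$, then $\CInd_H^G(c_1)\prec \CInd_H^G(c_2)$. This is a standard fact about co-induction (it can be checked directly from the definition: an observable for the co-induced action depends on only finitely many coordinates, so approximating its finite-dimensional distributions reduces to approximating finite-dimensional joint distributions of finitely many translates of an observable on the base $H$-action, which is exactly what $c_1\prec c_2$ provides). Applying this to $c_1=(a\times \tau_G)|H$ and $c_2=b$ yields
\[
\CInd_H^G\bigl((a\times \tau_G)|H\bigr)\prec \CInd_H^G(b).
\]

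Combining the two weak containments by transitivity of $\prec$ gives $a\prec \CInd_H^G(b)$, and since $a$ was arbitrary this shows $\CInd_H^G(b)$ is universal, as desired. The only non-trivial input beyond Theorem \ref{thm:main}/Corollary \ref{cor:cind} is the monotonicity lemma for co-induction, which is the main point to verify carefully if it has not already been recorded in the paper; everything else is formal.
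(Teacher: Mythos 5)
Your proof is correct, and it uses the same essential ingredients as the paper's (Corollary \ref{cor:cind}, universality, and monotonicity of $\CInd_H^G$ under weak containment), but the routing is slightly different. The paper fixes a \emph{universal} action $a$ of $G$ (whose existence is the Glasner--Thouvenot--Weiss/Hjorth result), notes $a|H\prec b$, and chains $a\prec \CInd_H^G(a|H)\prec \CInd_H^G(b)$ via item 3 of Corollary \ref{cor:a|H}; you instead take $a$ arbitrary, apply Corollary \ref{cor:cind} directly to get $a\prec \CInd_H^G((a\times\tau_G)|H)$, and then use universality of $b$ at the level of $H$-actions to absorb the $\tau_G$ factor, giving $(a\times\tau_G)|H\prec b$ and hence $\CInd_H^G((a\times\tau_G)|H)\prec\CInd_H^G(b)$. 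Your version is marginally more economical in that it does not invoke the existence of universal $G$-actions at all. The one external fact you flag --- that $c_1\prec c_2$ implies $\CInd_H^G(c_1)\prec\CInd_H^G(c_2)$ --- is indeed already recorded in the paper (it is cited as A.1 of \cite{Ke11} in the discussion following Corollary \ref{cor:cind}), so there is no gap.
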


In section \ref{sec:gactions} we describe the Gaussian action construction.  For every real positive definite function $\varphi$ defined on a countable set $T$, a probability measure $\mu _\varphi$ on $\R ^T$ is defined, and we call $(\R ^T ,\mu _\varphi )$ a \emph{Gaussian probability space}.  When $G$ acts on $T$ and $\varphi$ is invariant for this action, then $\mu _\varphi$ will be an invariant measure for the shift action of $G$ on $(\R ^T ,\mu _\varphi )$.  A p.m.p. action $a$ of $G$ is called a \emph{Gaussian action} if it is isomorphic to the shift action of $G$ on some Gaussian probability space $(\R ^T , \mu _\varphi )$ associated to an invariant positive definite function $\varphi$.  We show that Problem 1 always has a positive answer for Gaussian actions.

\begin{thm}\label{thm:gaussian}
Let $G$ be a countable group with a subgroup $H<G$. Suppose the action of $G$ on $G/H$ is amenable. Then the co-induced action $\CInd_H^G(a|H)$ weakly contains $a$ for every Gaussian action $a$ of $G$.
\end{thm}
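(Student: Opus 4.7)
The plan is to reduce the theorem to the known analog for orthogonal representations via the Gaussian functor. Recall that every Gaussian action $a$ of $G$ is canonically associated with an orthogonal representation $\pi_a:G\to O(\cH_a)$, where $\cH_a$ may be taken as the first-chaos subspace of $L^2_\R(X,\mu)$. I will use two standard properties of the assignment $\pi\mapsto a_\pi$: (i) it is functorial under restriction, so that $a_{\pi|H}\cong a_\pi|H$; and (ii) it preserves weak containment, i.e.\ if $\pi\prec\rho$ as orthogonal representations then $a_\pi\prec a_\rho$ (standard, cf.\ \cite{Ke10}).

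Next I would verify the key compatibility between co-induction and the Gaussian functor, namely $\CInd_H^G(a_\pi|H)\cong a_{\CInd_H^G(\pi|H)}$. Using a section $\sigma:G/H\to G$, the co-induced p.m.p.\ action is modeled on $\prod_{c\in G/H}(X_\pi,\mu_\pi)$ with a $G$-action given, in $\sigma$-coordinates, by a cocycle $h_{g,c}\in H$ acting in the $g^{-1}c$ coordinate via $a_\pi|H$; meanwhile the co-induced orthogonal representation, in the same coordinates, acts on $\bigoplus_{c\in G/H}\cH_\pi$ by the same cocycle via $\pi|H$. Since the Gaussian space of a countable orthogonal direct sum is the product of the component Gaussian spaces, and since the Gaussian functor is $H$-equivariant on each coordinate by (i), the desired $G$-equivariant isomorphism follows.

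Since the action $G\cc G/H$ is amenable, E.2.6 of \cite{BdlHV08} gives $\pi_a\prec \CInd_H^G(\pi_a|H)$ as unitary representations, and hence also as orthogonal representations (complexification commutes with co-induction and detects weak containment). Combining this with (ii) and the commutation just established yields
\[
a \;=\; a_{\pi_a} \;\prec\; a_{\CInd_H^G(\pi_a|H)} \;\cong\; \CInd_H^G(a_{\pi_a}|H) \;=\; \CInd_H^G(a|H),
\]
as required.

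The main obstacle I anticipate is the compatibility step $\CInd_H^G(a_\pi|H)\cong a_{\CInd_H^G(\pi|H)}$: it is conceptually clean but requires careful bookkeeping with the section $\sigma$ and the cocycle governing the co-induced $G$-action, so as to match the concrete description of the Gaussian measure associated to the co-induced representation. Once this identification is correctly set up, properties (i), (ii) and the amenable co-induction fact for representations combine mechanically to conclude.
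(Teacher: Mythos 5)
Your approach is essentially the paper's: realize $a\cong a(\pi)$ for an orthogonal representation $\pi$, show that the Gaussian functor intertwines restriction and (co-)induction so that $\CInd_H^G(a(\pi)|H)\cong a(\mbox{Ind}_H^G(\pi|H))$, establish the containment at the level of representations from amenability of $G\cc G/H$, and push it through the Gaussian functor. The compatibility step you single out as the main obstacle is exactly the paper's Lemma \ref{lem:commute}, and your sketch of it (the Gaussian space of a countable orthogonal direct sum is the product of the coordinate Gaussian spaces, matched via the section $\sigma$ and the cocycle $\rho(g,kH)=\sigma(gkH)^{-1}g\sigma(kH)$) is the right one.

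There is, however, one genuine gap in your chain: the notion of weak containment of representations. The transfer principle you invoke as (ii) is, as stated in the source you cite (the remark after Theorem 11.1 of \cite{Ke10}), $\pi_1\prec_Z\pi_2\Rightarrow a(\pi_1)\prec a(\pi_2)$, where $\prec_Z$ is weak containment in the sense of Zimmer: one must approximate the \emph{entire Gram matrix} $\langle\pi_1(g)v_i,v_j\rangle$ of finitely many vectors by the Gram matrix of vectors in $\pi_2$. This is in general stronger than ordinary weak containment $\prec$ (which only approximates single diagonal matrix coefficients by finite sums of such, and in particular is insensitive to multiplicity), and ordinary $\prec$ is what E.2.6 of \cite{BdlHV08} asserts. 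Since approximating the joint distribution of finitely many coordinates of a Gaussian action requires controlling the full covariance matrix, $\pi_1\prec\pi_2$ alone does not let you conclude $a(\pi_1)\prec a(\pi_2)$, and your final displayed chain does not close as written. The repair is the one the paper makes in Lemma \ref{lem:ind}: prove directly that $\pi\prec_Z\pi\otimes\lambda_{G/H}\cong\mbox{Ind}_H^G(\pi|H)$ by taking $w_i=v_i\otimes u_N$ with $u_N$ asymptotically invariant unit vectors in $\ell^2(G/H,\R)$; this approximates all matrix coefficients $\langle\pi(g)v_i,v_j\rangle$ simultaneously and yields the Zimmer containment that the Gaussian functor actually needs.
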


Part of the motivation for posing Problem 1 above concerns a property of groups introduced by Kechris called property MD.  To be precise, let $G$ be a residually finite group, and let $\rho_G$ be the canonical action of $G$ on its profinite completion. Recall that $\tau_G$ is the trivial action of $G$ on $(X,\mu)$, a standard non-atomic probability space.  Then $G$ has MD if and only if every p.m.p. action of $G$ is weakly contained in the product action $\tau_G \times \rho_G$. 

The property MD is an ergodic theoretic analog of the property FD discussed
in Lubotzky-Shalom \cite{LS04} (see also Lubotzky-Zuk \cite{LZ03}). This asserts
that the finite unitary representations of $G$ on an infinite-dimensional separable
Hilbert space $\cH$ are dense in the space of unitary representations
of $G$ in $\cH$. It is not difficult to show that $MD \Rightarrow FD$ but the converse is unknown. 

It is known (see \cite{Ke11} for more details), that the following groups have MD: residually finite amenable groups, free products of finite groups, subgroups of MD groups, finite extensions of MD groups. On the other hand, various groups such as $SL_n(\Z)$ for $n>2$ are known not to have FD \cite{LS04} \cite{LZ03} and hence also do not have MD. It is an open question whether the direct product of two free groups has MD.

In \cite{Ke11}, Conjecture 4.14, Kechris conjectured the following:
\begin{thm}\label{conj:main}
Let $N$ be an infinite, residually finite 
group satisfying MD. Let $N \vartriangleleft G$ with $G$ residually finite. Assume that: 
\begin{enumerate}
\item For every $H \vartriangleleft N$ with $[N : H ] < \infty$, there is $G' \vartriangleleft G$ such that $G' \subset H$ 
and $[N : G' ] < \infty$. 
\item $G/N$ is a residually finite, amenable group. 
\end{enumerate}
Then $G$ satisfies MD. 
\end{thm}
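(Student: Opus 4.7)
The plan is to fix an arbitrary p.m.p.\ action $a$ of $G$ and show $a\prec\tau_G\times\rho_G$. If $[G:N]<\infty$ then $G$ is a finite extension of the MD group $N$ and the conclusion is immediate from the known stability of MD under finite extensions, so assume $[G:N]=\infty$. I would apply Theorem~\ref{thm:main} with $H=N$ and $b$ the Bernoulli shift of $G/N$ on a non-atomic base $(X_0,\mu_0)$; then $b$ is free, and $b_G\cong\CInd_N^G(\tau_N)\cong s_{G,G/N,X_0}$. Since co-induction commutes with products, $\CInd_N^G(a|N)\times b_G\cong\CInd_N^G((a\times\tau_G)|N)$, so Theorem~\ref{thm:main} yields $a\prec\CInd_N^G((a\times\tau_G)|N)$. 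Next, because $N$ has MD, $(a\times\tau_G)|N\prec\tau_N\times\rho_N$; applying the standard facts that co-induction preserves weak containment and commutes with products gives
\[
a\prec\CInd_N^G(\tau_N\times\rho_N)=\CInd_N^G(\tau_N)\times\CInd_N^G(\rho_N)=s_{G,G/N,X_0}\times\CInd_N^G(\rho_N).
\]

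It remains to show that each of these two factors is weakly contained in $\tau_G\times\rho_G$. For the first: $s_{G,G/N,X_0}$ is the lift to $G$ of the Bernoulli shift of $G/N$, and since $G/N$ is residually finite amenable and so has MD, this Bernoulli shift is $\prec\tau_{G/N}\times\rho_{G/N}$ as a $G/N$-action. Residual finiteness of $G$ then lets the canonical map $G\to\widehat{G/N}$ factor through $\hat G$, exhibiting $(\rho_{G/N})_G$ as a factor of $\rho_G$, so $s_{G,G/N,X_0}\prec\tau_G\times\rho_G$.

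The harder factor is $\CInd_N^G(\rho_N)$, and this is where hypothesis~(1) is essential. I would use (1) to write $\rho_N=\varprojlim_{G'}(N\cc N/G')$ along the cofinal family of $G'\triangleleft G$ with $G'\subseteq N$ and $[N:G']<\infty$. For each such $G'$, a direct check of the co-induction formula shows that $\CInd_N^G(N\cc N/G')$ factors through the countable quotient $G/G'$, which is an extension of the residually finite amenable group $G/N$ by the finite group $N/G'$; this quotient is amenable, and hypothesis~(1) combined with residual finiteness of $G$ forces $G/G'$ to be residually finite as well, so $G/G'$ has MD. Hence $\CInd_N^G(N\cc N/G')$ as a $G/G'$-action is $\prec\tau_{G/G'}\times\rho_{G/G'}$, and lifting to $G$ and realizing $\rho_{G/G'}$ as a factor of $\rho_G$ via residual finiteness of $G$ gives $\CInd_N^G(N\cc N/G')\prec\tau_G\times\rho_G$. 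Passing to the inverse limit yields $\CInd_N^G(\rho_N)\prec\tau_G\times\rho_G$. Combining the two factors and absorbing products using $\rho_G\times\rho_G\prec\rho_G$ (because every finite factor of $\rho_G\times\rho_G$ is a finite $G$-action, hence a factor of $\rho_G$) concludes $a\prec\tau_G\times\rho_G$, so $G$ has MD.

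The main obstacle will be the verification that $G/G'$ is residually finite: this is the only point at which the precise form of hypothesis~(1) is needed, combined with residual finiteness of $G$, in order to separate nontrivial elements of $N/G'$ from the identity inside finite quotients of $G/G'$. Once this delicate step is in place, the remainder of the argument assembles routinely from stability of weak containment under co-induction and products, and from the known MD property for residually finite amenable groups.
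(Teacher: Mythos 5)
Your overall architecture parallels the paper's: everything rests on Theorem \ref{thm:main} (via Corollary \ref{cor:cind}) together with the assertion that co-inducing suitable actions of $N$ up to $G$ lands in the weak closure of the finite actions of $G$. The steps you carry out explicitly --- the reduction $a\prec\CInd_N^G((a\times\tau_G)|N)$, the use of MD for $N$, the product decomposition via Lemma \ref{lem:prod}, the treatment of $\CInd_N^G(\tau_N)\cong s_{G,G/N,X_0}$ using MD of $G/N$, and closure of $\{c:c\prec\tau_G\times\rho_G\}$ under products --- are all correct and correspond to Corollary \ref{cor:cind} and the two lemmas of Section 2. The difference is that the paper does not reprove the key containment $\CInd_N^G(a|N)\prec\tau_G\times\rho_G$: it cites Section 4 of \cite{Ke11} for it. You instead try to derive it, and you reduce it to the claim that $G/G'$ is residually finite whenever $G'\vartriangleleft G$, $G'\subseteq N$ and $[N:G']<\infty$.

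That claim is exactly where the content of the theorem lies, and you have not proved it; you have only announced it as ``the main obstacle.'' It is not a routine verification. A group with a finite normal subgroup whose quotient is residually finite need not itself be residually finite (Deligne-type central extensions), so the structure of $G/G'$ as (finite)-by-$(G/N)$ is not enough. Nor does residual finiteness of $G$ help directly: for $g\in N\setminus G'$ one can find a finite-index normal $L\vartriangleleft G$ with $g\notin L$, but to kill $g$ in a finite quotient of $G/G'$ one needs $g\notin LG'$, i.e., $L$ must avoid the entire (typically infinite) coset $gG'$; since $[N:G']<\infty$ this amounts to producing a finite-index subgroup $L\leq G$ with $L\cap N\subseteq G'$, that is, to showing $G'$ is closed in the profinite topology of $G$. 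Hypothesis (1) supplies normal subgroups of finite index in $N$, not in $G$, so it does not obviously yield such an $L$. Until this step is supplied, the proof is incomplete; the economical fix is to do what the paper does and invoke the result of \cite{Ke11}, Section 4, that $\CInd_N^G(a|N)\prec\tau_G\times\rho_G$ under the stated hypotheses, after which your remaining steps finish the argument.
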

As noted in \cite{Ke11}, this result implies that surface groups and the fundamental groups of virtually fibered closed hyperbolic 3-manifolds, (e.g., $SL_2(\Z[i])$) have property MD. This follows from the fact that free groups have property MD (proven in \cite{Ke11} and in different terminology in \cite{Bo03}). Kechris proved that an affirmative answer to Problem 1 above implies Theorem \ref{conj:main}. Our proof follows his line of argument.

Note: If $N$ is finitely generated then the first condition of Theorem \ref{conj:main} is automatically satisfied since if $N$ is normal in $G$ and $H<N$ has finite index, then for every $g\in G$, $gHg^{-1}$ is a subgroup of $N$ with the same index as $H$. Because $N$ is finitely generated, this implies there are only finitely many different conjugates of $H$. The intersection of all these conjugates is a normal subgroup in $G$ with finite-index in $N$. 


{\bf Acknowledgements}: We would like to thank Alekos Kechris for encouraging us to take on this problem and for many valuable comments on an earlier draft of this paper. L.B. was partially supported by NSF grants DMS-0968762 and DMS-0954606.

\section{The space of actions and proof of Theorem \ref{conj:main}}

Let $(X,\mu)$ denote a standard non-atomic probability space and $A(G,X,\mu)$ the set of all p.m.p. actions of $G$ on $(X,\mu)$. This set is naturally identified with a subset of the product space $\Aut(X,\mu)^G$ where $\Aut(X,\mu)$ denotes the space of all automorphisms of $(X,\mu)$. We equip the $\Aut(X,\mu)$ with the weak topology, $\Aut(X,\mu)^G$ with the product topology, and $A(G,X,\mu)$ with the subspace topology (also called the weak topology). The group $\Aut(X,\mu)$ acts on $A(G,X,\mu)$ by $(T a)(g) = T a(g)T^{-1}$ for all $T \in \Aut(X,\mu)$, $a\in A(G,X,\mu)$ and $g\in G$. The orbit of $a$ under this action is called its {\em conjugacy class}.

\begin{lem}\label{lem:preclem}
Let $a,b \in A(G,X,\mu)$. Then $a \prec b$ if and only if $a$ is contained in the (weak) closure of the conjugacy class of $b$.
\end{lem}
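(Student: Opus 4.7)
The plan is to prove the two implications separately, and to work with the standard partition-based description of the weak topology. A basic open neighborhood of $a \in A(G,X,\mu)$ has the form
\[
U = \{a' \csuchthat \mu(a'(f)A_i \triangle a(f)A_i) < \epsilon \textrm{ for all } f \in F,~ 1 \le i \le k\}
\]
for some finite $F \subset G$ (which I take to contain $e$), a finite partition $(A_i)_{i=1}^k$ of $X$, and $\epsilon > 0$. Encoding the partition as an observable $\phi := \sum_i i \cdot 1_{A_i}$, closeness of $a'$ to $a$ in $U$ is equivalent, up to a factor depending only on $k$ and $|F|$, to closeness of the $F$-joint distributions $\phi_{a'}^F{}_*\mu$ and $\phi_a^F{}_*\mu$ in total variation.

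For the easier direction, suppose $a = \lim_n T_n b T_n^{-1}$ with $T_n \in \Aut(X,\mu)$. Given an observable $\phi$ for $a$, a finite $F \subset G$, and $\epsilon > 0$, I would truncate $\phi$ to a bounded observable (losing at most $\epsilon/3$ mass) and then set $\psi_n := \phi \circ T_n$, which is an observable for $b$. Since $T_n$ preserves $\mu$, a change of variables gives $(\psi_n)_b^F{}_*\mu = \phi_{T_n b T_n^{-1}}^F{}_*\mu$. The convergence $T_n b(f) T_n^{-1} \to a(f)$ in the weak topology on $\Aut(X,\mu)$ translates, for the finitely many level sets of the truncated $\phi$, into $\|\phi_{T_n b T_n^{-1}}^F{}_*\mu - \phi_a^F{}_*\mu\|_1 \to 0$, so $\psi_n$ witnesses weak containment for all large~$n$.

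For the harder direction, suppose $a \prec b$ and fix a basic neighborhood $U$ of $a$ as above. Weak containment produces, for any $\delta > 0$, an observable $\psi : X \to \{1,\ldots,k\}$ for $b$ with $\|\phi_a^F{}_*\mu - \psi_b^F{}_*\mu\|_1 < \delta$. Let $B_i := \psi^{-1}(i)$ and introduce the refined partitions
\[
\mathcal{P}_{\bar i} := \bigcap_{f \in F} a(f)^{-1} A_{i_f}, \qquad \mathcal{Q}_{\bar i} := \bigcap_{f \in F} b(f)^{-1} B_{i_f} \qquad (\bar i \in \{1,\ldots,k\}^F),
\]
so that $\sum_{\bar i} |\mu(\mathcal{P}_{\bar i}) - \mu(\mathcal{Q}_{\bar i})| < \delta$. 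Using that $(X,\mu)$ is standard non-atomic, a routine partition-matching argument produces $T \in \Aut(X,\mu)$ with $\sum_{\bar i} \mu(T\mathcal{Q}_{\bar i} \triangle \mathcal{P}_{\bar i}) < 2\delta$. The key computation is then that, because $e \in F$ implies $A_i = \bigsqcup_{j_e = i} \mathcal{P}_{\bar j}$ and $B_i = \bigsqcup_{j_e = i} \mathcal{Q}_{\bar j}$, and likewise $a(f)^{-1}A_i$ and $b(f)^{-1}B_i$ decompose along coordinates of the index $\bar j$, the identity $Tb(f)T^{-1}A_i$ differs from $a(f)A_i$ by a set whose measure is bounded by a constant times $\delta$. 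Choosing $\delta$ small enough in terms of $\epsilon, k, |F|$ places $TbT^{-1}$ in $U$, so $a$ lies in the closure of the conjugacy class of $b$.

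The main obstacle is the bookkeeping in the last step: one must verify that the error bound on the atom-by-atom match of $\mathcal{P}$ and $\mathcal{Q}$ propagates linearly through the computation of $Tb(f)T^{-1}A_i \triangle a(f)A_i$, rather than blowing up multiplicatively with $|F|$ and $k$. Everything else is routine, but this estimate has to be done carefully to ensure that $\delta$ can be prescribed in advance as a function of the target neighborhood.
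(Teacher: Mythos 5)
The paper does not actually prove this lemma: its ``proof'' is the single line ``This is Proposition 10.1 of \cite{Ke10}.'' Your argument is therefore a genuinely different route only in the sense that you supply a self-contained proof of the cited fact, and what you write is correct --- it is essentially the standard argument behind Kechris's Proposition 10.1. On the one point you flag as the main obstacle: the error does propagate linearly, with an absolute constant. After enlarging $F$ to $F\cup F^{-1}\cup\{e\}$ (this is needed so that $a(f)A_i$, and not merely $a(f)^{-1}A_i$, is a union of atoms of your refined partition, namely $a(f)A_i=\bigsqcup_{\bar j\csuchthat j_{f^{-1}}=i}\mathcal{P}_{\bar j}$ and likewise $b(f)B_i=\bigsqcup_{\bar j\csuchthat j_{f^{-1}}=i}\mathcal{Q}_{\bar j}$), one gets
$\mu\bigl(Tb(f)T^{-1}A_i \,\triangle\, a(f)A_i\bigr)\le \mu\bigl(T^{-1}A_i\,\triangle\,B_i\bigr)+\sum_{\bar j\csuchthat j_{f^{-1}}=i}\mu\bigl(T\mathcal{Q}_{\bar j}\,\triangle\,\mathcal{P}_{\bar j}\bigr)\le 4\delta$,
uniformly in $f\in F$, $i\le k$; so $\delta<\epsilon/4$ suffices and there is no blow-up in $k$ or $|F|$. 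Two small points worth making explicit in a final write-up: the observable $\psi$ furnished by $a\prec b$ takes values in $\N$, not a priori in $\{1,\dots,k\}$, but since $\psi^F_*\mu$ must give mass at least $1-\delta$ to $\{1,\dots,k\}^F$ you may redefine $\psi$ on a set of measure $\delta$ at a cost of $O(|F|\delta)$ to the joint distribution; and in the easy direction the truncation of $\phi$ is genuinely necessary, since weak convergence in $\Aut(X,\mu)$ only controls finitely many level sets at a time. What the citation buys the paper is brevity; what your argument buys is an explicit, quantitative dictionary between the observable formulation of $\prec$ and the topology of $A(G,X,\mu)$, which is the form of the lemma the paper actually uses later when passing to weak limits of finite actions.
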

\begin{proof}
This is Proposition 10.1 of \cite{Ke10}. 
\end{proof}

An action $a \in A(G,X,\mu)$ is {\em finite} if it factors through the action of a finite group. From lemma \ref{lem:preclem} it follows that for any $a\in A(G,X,\mu)$, $a \prec \tau_G \times \rho_G$ if and only if $a$ is contained in the (weak) closure of the set of finite actions (this is implied by the proof of Proposition 4.8 \cite{Ke11}).

We need the following lemmas.
\begin{lem}
Let $a, b$ be actions of a countable group $G$. If $a$ and $b$ are weakly contained in $\tau_G \times \rho_G$ then $a \times b$ is weakly contained in $\tau_G \times \rho_G$. \end{lem}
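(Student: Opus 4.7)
The plan is to apply the characterization noted just after Lemma \ref{lem:preclem}: an action $c \in A(G,X,\mu)$ satisfies $c \prec \tau_G \times \rho_G$ if and only if $c$ lies in the weak closure of the set of finite actions. By hypothesis, I can therefore pick sequences of finite actions with $a_n \to a$ and $b_n \to b$ weakly; say $a_n$ factors through a finite quotient $G \twoheadrightarrow K_n$ and $b_n$ through $G \twoheadrightarrow L_n$. Then $a_n \times b_n$ factors through the finite group $K_n \times L_n$, so after identifying $(X \times X, \mu \times \mu)$ with $(X,\mu)$ via a fixed measure isomorphism, each $a_n \times b_n$ is itself a finite action.

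It then suffices to establish joint continuity of the product $(c,d) \mapsto c \times d$ in the weak topology, so that $a_n \times b_n \to a \times b$. Fix a finite $F \subset G$ and an observable $\Phi : X \times X \to \N$; the goal is
\[
\|(\Phi^F_{a_n \times b_n})_*(\mu \times \mu) - (\Phi^F_{a \times b})_*(\mu \times \mu)\|_1 \to 0.
\]
I would first reduce to ``rectangle'' observables $\Phi(x,y) = \langle \phi(x), \psi(y)\rangle$ for a fixed bijection $\langle\cdot,\cdot\rangle : \N \times \N \to \N$: any finite partition of $X \times X$ is approximated in $\mu \times \mu$-measure by a partition into measurable rectangles, and by measure-preservation the induced error in $\Phi^F_c$ is at most $|F|$ times the approximation error, uniformly in $c$. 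For rectangle observables the joint distribution factors as
\[
(\Phi^F_{c \times d})_*(\mu \times \mu) = (\phi^F_c)_*\mu \otimes (\psi^F_d)_*\mu,
\]
which is continuous in $(c,d)$ since the total variation of a product of probability measures is controlled by the total variations of the marginals.

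Combining the two steps, $a \times b$ is a weak limit of the finite actions $a_n \times b_n$, and the characterization above gives $a \times b \prec \tau_G \times \rho_G$. The main technical point is the joint continuity of products in the weak topology, which is standard but does require the rectangle-approximation step; an alternative would be to invoke a general product-monotonicity property of weak containment from \cite{Ke10} and then verify that $\tau_G \times \rho_G$ absorbs products with itself up to weak equivalence, but that verification ultimately reduces to the same approximation.
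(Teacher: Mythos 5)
Your proof is correct and follows essentially the same route as the paper's: approximate $a$ and $b$ by finite actions, observe that a product of finite actions is finite (factoring through the product of the two finite quotients), and conclude by continuity of the product map in the weak topology. The paper simply asserts that $a_i \times b_i \to a \times b$ weakly, whereas you supply the rectangle-approximation argument justifying that joint continuity; this is a correct filling-in of the detail rather than a different approach.
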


\begin{proof}
If $a$ is a weak limit of finite actions $a_i$ and $b$ is a weak limit of finite actions $b_i$ then $a\times b$ is the weak limit of $a_i\times b_i$. 
\end{proof}

\begin{lem}
 If $H<G$ is a normal subgroup, $G/H$ is amenable and residually finite, and $b$ is a p.m.p. action of $G/H$ then the induced action $b_G$ of $G$ is weakly contained in $\tau_G \times \rho_G$.
\end{lem}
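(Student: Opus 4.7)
The plan is to exploit the fact that $b_G$ factors through $G/H$ and then use that $G/H$ itself has property MD in a strong (explicit) form, which then lifts through the quotient map.

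First I would note that, as remarked in the introduction of the paper, every residually finite amenable group has property MD (this is in \cite{Ke11}). Applying this to $G/H$, the action $b$ of $G/H$ satisfies $b \prec \tau_{G/H} \times \rho_{G/H}$. By the observation the paper makes right after Lemma \ref{lem:preclem}, this is equivalent to saying that $b$ lies in the weak closure of the set of finite actions of $G/H$: there is a net (or sequence) $b_i \in A(G/H,X,\mu)$ of actions factoring through finite quotients $(G/H)/K_i$ of $G/H$ such that $b_i \to b$ in the weak topology on $A(G/H,X,\mu)$.

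Next I would lift everything to $G$ via the quotient map $\pi : G \to G/H$. For any action $c$ of $G/H$, define its lift $c_G$ by $c_G(g) = c(\pi(g))$. The map $c \mapsto c_G$ is a continuous injection $A(G/H,X,\mu) \to A(G,X,\mu)$ when both spaces are given their weak topologies, because convergence in $\Aut(X,\mu)^{G}$ is determined coordinate by coordinate and $c_G(g) = c(\pi(g))$ depends continuously on $c$. Hence $(b_i)_G \to b_G$ weakly in $A(G,X,\mu)$.

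Now I would check that each $(b_i)_G$ is a finite action of $G$. Since $b_i$ factors through the finite group $(G/H)/K_i$, the lift $(b_i)_G$ factors through $G/\pi^{-1}(K_i)$, which is a finite quotient of $G$ (note $H \subseteq \pi^{-1}(K_i)$ and $[G : \pi^{-1}(K_i)] = [(G/H) : K_i] < \infty$). Therefore $b_G$ is a weak limit of finite actions of $G$, and by the observation already cited this gives $b_G \prec \tau_G \times \rho_G$, as desired.

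There is no real obstacle here; the argument is a transfer-across-the-quotient-map argument. The only nontrivial input is that amenable residually finite groups have MD, which is established in \cite{Ke11}. The rest is formal: pulling back finite actions along $\pi$ yields finite actions, and pullback is continuous in the weak topology.
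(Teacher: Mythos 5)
Your argument is correct and is essentially identical to the paper's proof: the paper likewise invokes that the residually finite amenable group $G/H$ has MD, writes $b$ as a weak limit of finite actions $b_i$ of $G/H$, and observes that the lifted actions $b_{G,i}$ are finite and converge weakly to $b_G$. You have merely made explicit the two routine verifications (continuity of the lift $c \mapsto c_G$ and finiteness of the lifted actions) that the paper leaves implicit.
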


\begin{proof}
As noted in \cite{Ke11}, because $G/H$ is residually finite and amenable, it has MD. Therefore, $b$ is a weak limit of finite actions $b_i$ of $G/H$. If $b_{G,i}$ are the induced actions of $G$, then the $b_{G,i}$ are also finite and $b_{G,i}$ converges weakly to $b_G$.
\end{proof}

\begin{proof}[Proof of Theorem \ref{conj:main} from Theorem \ref{thm:main}]
Let $a$ be a p.m.p. action of $G$. In \cite{Ke11} section 4, it is shown that $\CInd_N^G(a|N)$ is weakly contained in $\tau_G \times \rho_G$. Let $b$ be a free p.m.p. action of $G/N$. Because $G/N$ is amenable the previous lemmas imply $\CInd_N^G(a|N)\times b_G \prec \tau_G \times \rho_G$. So Theorem \ref{thm:main} implies $a \prec \CInd_N^G(a|N)\times b_G \prec \tau_G \times \rho_G$. Since $a$ is arbitrary, $G$ has MD.
\end{proof}

\section{The Rohlin Lemma}

The purpose of this section is to prove:
\begin{thm}\label{thm:tiling}
If $G$ is a countably infinite amenable group then for every free p.m.p. action $G \cc^a (X,\mu)$, every finite $F \subset G$ and $\epsilon>0$ there is a measurable map $J: X \to G$ such that
$$\mu(\{ x\in X:~ J(a(f)x)=fJ(x) ~\forall f\in F\}) \ge 1-\epsilon.$$
\end{thm}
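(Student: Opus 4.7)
The plan is to reduce the statement directly to the Ornstein--Weiss quasi-tiling theorem for free p.m.p.\ actions of countable amenable groups. Think of $J$ as a coordinate map: on most of $X$ a large F\o lner tile passes through the point $x$, and $J(x)$ records where $x$ sits inside that tile. Once the tile is sufficiently invariant, translating $x$ by $f \in F$ just shifts the coordinate by $f$, so the desired equivariance condition $J(a(f)x) = f J(x)$ will be automatic on the ``$F$-interior'' of each tile.

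Concretely, I would fix a small $\delta > 0$ (to be chosen in terms of $\epsilon$ at the end) and invoke the Ornstein--Weiss quasi-tiling theorem to obtain finite $(F,\delta)$-invariant sets $T_1,\ldots,T_n \subset G$ together with measurable base sets $A_1,\ldots,A_n \subset X$ such that the family $\{a(t)A_i : 1 \le i \le n,\ t \in T_i\}$ is pairwise disjoint and has union of measure at least $1-\delta$. Freeness of $a$ ensures that every point of this union has a unique representation $x = a(t)y$ with $y \in A_i$ and $t \in T_i$, so the formula $J(a(t)y) := t$ defines $J$ measurably on the tiled set; on the remaining piece of measure $<\delta$ I would extend $J$ arbitrarily, say by the identity $e$.

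To verify the conclusion, fix $f \in F$ and suppose $x = a(t)y$ with $t$ in the $F$-interior $\{s \in T_i : Fs \subseteq T_i\}$ of its tile. Then $a(f)x = a(ft)y$ still lies in the tile $a(T_i)A_i$ with label $ft$, so $J(a(f)x) = ft = fJ(x)$ as required. The set where equivariance can fail is contained in the union of tile boundaries plus the exceptional set, which by the $(F,\delta)$-invariance of the $T_i$ and disjointness of the tiles has total measure at most $2\delta$; taking $\delta < \epsilon/2$ completes the argument. The main (really the only) obstacle is quoting the Ornstein--Weiss quasi-tiling theorem; the construction of $J$ and the measure bookkeeping are then straightforward.
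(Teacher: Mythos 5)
Your proof is correct and follows essentially the same route as the paper: the paper invokes Ollagnier's exact-tiling form of the Rohlin lemma (Corollary 8.3.12 of [Ol85]) to obtain pairwise disjoint, $(F,\delta)$-invariant towers $\{a(\lambda)A_i\}$ covering all but $\eta$ of the space, defines $J$ as the tile coordinate, and bounds the failure set by the tile boundaries plus the uncovered part, exactly as you do. The only caution is that the Ornstein--Weiss quasi-tiling theorem in its standard form yields only $\epsilon$-disjoint tiles rather than genuinely disjoint ones, so you should either cite a version guaranteeing exact disjointness (such as Ollagnier's) or add the minor extra bookkeeping needed to define $J$ measurably and control the overlap set when tiles may intersect slightly.
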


This will follow easily from the following version of the Rohlin Lemma due to Ollagnier \cite{Ol85} Corollary 8.3.12 (see 2.2.8. for the definition of $M(D,\delta)$).

\begin{thm}
Let $G \cc  (X,\mu)$ be as above. Then for every finite $F \subset G$, for every $\delta,\eta>0$ there exists a finite collection $\{(\Lambda_i,A_i)\}_{i\in I}$ satisfying:
\begin{enumerate}
\item for every $i\in I$, $\Lambda_i \subset G$ is finite and 
$$ \frac{ |\{ g\in \Lambda_i:~ \exists f\in F, ~fg \notin \Lambda_i \}| }{ |\Lambda_i| } <\delta,$$
\item each $A_i$ is a measurable subset of $X$ with positive measure,
\item $a(\lambda_i)A_i \cap a(\lambda_j ) A_j = \emptyset$ if $i\ne j$, $\lambda_i \in \Lambda_i$ and $\lambda_j \in \Lambda_j$,
\item $a(\lambda )A_i\cap a(\lambda ')A_i =\emptyset$ if $\lambda ,\lambda ' \in \Lambda _i$ and $\lambda\neq \lambda '$,
\item $\mu\left( \cup_{i\in I} \cup_{\lambda \in \Lambda_i} a(\lambda) A_i \right) \ge 1-\eta.$
\end{enumerate}
\end{thm}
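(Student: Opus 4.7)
The plan is to prove this tiling theorem by the standard two-step strategy of Ornstein--Weiss: first produce a combinatorial family of F\o{}lner-type shapes with strong nested invariance, and then tile the dynamical system greedily by these shapes using a measure-theoretic exhaustion argument.

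First I would use amenability of $G$ to construct the shapes $\Lambda_1 \subsetneq \Lambda_2 \subsetneq \cdots \subsetneq \Lambda_N$, each satisfying the $(F,\delta)$-invariance condition (1), chosen so that $\Lambda_{i+1}$ is so much more invariant than $\Lambda_i$ that translates of $\Lambda_i$ will fit inside translates of $\Lambda_{i+1}$ with negligible boundary overlap, and so that $N$ and the relative sizes depend only on $\delta$ and $\eta$. This is a purely group-theoretic step from F\o{}lner's criterion.

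Second, I would select the base sets $A_i$ by a downward greedy procedure. At stage $N$, using Zorn's lemma together with freeness of the action (so that the disjointness condition cuts out a nontrivial measurable constraint), choose a measurable $A_N \subset X$ that is maximal with respect to $\{a(\lambda)A_N\}_{\lambda \in \Lambda_N}$ being pairwise disjoint; set $C_N = \bigcup_{\lambda \in \Lambda_N} a(\lambda)A_N$. Inductively for $i = N-1,\ldots,1$, restrict attention to $X\setminus (C_{i+1}\cup\cdots\cup C_N)$ and pick a maximal $A_i$ there such that $\{a(\lambda)A_i\}_{\lambda \in \Lambda_i}$ is pairwise disjoint and lies in the complement. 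Properties (2), (3), and (4) then follow directly: (2) is preserved at each stage by choosing the $A_i$ to be nontrivial as long as positive uncovered measure remains, (3) holds because stage $i$ is restricted to the complement of all previous $C_j$, and (4) is immediate from the disjointness imposed at each stage.

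The main obstacle, and the heart of the proof, is verifying condition (5): that the uncovered region $X \setminus \bigcup_i C_i$ has measure less than $\eta$. The idea is that if the uncovered region at stage $i$ had measure exceeding a threshold depending on $|\Lambda_i|$ and the invariance parameter, then a measurable selection combined with freeness would yield some $x$ such that $\{a(\lambda)x : \lambda\in\Lambda_i\}$ is disjoint from all previously placed tiles, giving an enlargement of $A_i$ and contradicting its maximality. The strong invariance $\Lambda_{i+1} \gg \Lambda_i$ is then used to convert this one-step bound into a definite multiplicative shrinkage of the uncovered measure across iterations, so that after $N$ steps the uncovered set has measure below $\eta$. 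This quantitative shrinkage, in which one balances boundary loss across the $N$ nested scales, is the technically delicate part, and is precisely the bookkeeping carried out in Ollagnier's Corollary 8.3.12.
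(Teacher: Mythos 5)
The paper never proves this theorem: it is quoted verbatim from Ollagnier \cite{Ol85} (Corollary 8.3.12), so there is no internal proof to compare against, and your sketch must stand on its own as a proof of the Ornstein--Weiss type Rohlin lemma. It does not, because the step you yourself identify as the heart of the matter fails as described. With \emph{exact} disjointness (conditions 3 and 4), maximality of $A_i$ buys you very little: it only rules out a positive-measure set of points $x$ in the uncovered region $U$ whose entire prospective tile $\{a(\lambda)x : \lambda \in \Lambda_i\}$ avoids everything already placed. Hence all you can conclude is that for a.e.\ $x \in U$ some $\lambda \in \Lambda_i$ sends $x$ into the covered part, which gives $\mu(U) \le |\Lambda_i|\,\mu(\mbox{covered})$, i.e.\ a per-stage gain of order $1/|\Lambda_i|$ --- a quantity that tends to $0$ precisely as the shapes become more invariant. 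Since the number of scales $N$ is fixed while $|\Lambda_i| \to \infty$ as $\delta \to 0$, the product of the factors $(1 - 1/|\Lambda_i|)$ stays near $1$, and the greedy exact-disjoint construction can terminate with uncovered measure close to $1$; no ``definite multiplicative shrinkage'' follows. The uniform per-stage fraction in Ornstein--Weiss \cite{OW80} comes from relaxing to $\epsilon$-disjointness (tiles are allowed to overlap previously placed material in a small proportion), and the passage from $\epsilon$-disjoint quasi-towers back to the exactly disjoint towers with \emph{full} shapes $\Lambda_i$ and common bases $A_i$ demanded by (3)--(4) is itself a substantial argument --- one cannot simply trim overlaps, since trimming one tile means shrinking the base $A_i$, which alters every other tile over that base. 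Your closing sentence defers exactly this bookkeeping to ``Ollagnier's Corollary 8.3.12,'' but that corollary \emph{is} the statement to be proved, so the proposal is circular at its crux.

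Two smaller points, both repairable but genuine omissions as written. First, enlarging $A_i$ by ``some $x$'' whose tile is free contradicts nothing, since a single point has measure zero; you need a positive-measure set $B$ in the uncovered region with $\{a(\lambda)B\}_{\lambda \in \Lambda_i}$ pairwise disjoint and disjoint from prior tiles, which freeness does supply (partition any positive-measure candidate set into countably many pieces separated under the finitely many maps $a(g)$, $g \in \Lambda_i^{-1}\Lambda_i \setminus \{e\}$, and take a piece of positive measure). Second, Zorn's lemma is awkward for measurable sets modulo null sets; the standard device is an exhaustion argument taking a sequence of admissible sets whose measures approach the supremum. Neither of these rescues condition (5): without the $\epsilon$-disjointness mechanism, or some other idea replacing it, the proof does not go through.
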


\begin{proof}[Proof of Theorem \ref{thm:tiling}]
Let $0<\delta, \eta< \epsilon/2$. Without loss of generality, we assume $e \in F$. Let $\{(\Lambda_i,A_i)\}_{i\in I}$ be as in the theorem above. Define $J$ by $J(x)=\lambda_j$ if there is a $j\in I$ and $\lambda_j \in \Lambda_j$ such that $x \in a(\lambda_j) A_j$. If $x$ is not in $\cup_{i\in I} \cup_{\lambda \in \Lambda_i} a(\lambda) A_i $, then define $J(x)$ arbitrarily. For each $i$, let $\Lambda'_i = \cap_{f\in F} f^{-1} \Lambda_i$. The theorem above implies $|\Lambda'_i| \ge (1-\delta)|\Lambda_i|$. Observe that
$$\{ x\in X:~ J(a(f)x)=fJ(x) ~\forall f\in F\} \supset \cup_{i \in I} \cup_{\lambda \in \Lambda'_i} a(\lambda) A_i.$$
Thus
$$\mu(  \{ x\in X:~ J(a(f)x)=fJ(x) ~\forall f\in F\}) \ge 1 - \eta - \delta \ge 1-\epsilon.$$
\end{proof}

\section{Proof of Theorem \ref{thm:main}}
Assume the hypotheses of Theorem \ref{thm:main}. In particular, we assume that $G/H \cc^b (Y,\nu)$ is a free p.m.p. action of the infinite amenable group $G/H$. For simplicity, if $g\in G$ and $y \in Y$, let $gy$ denote $b(gH)y$. 


Let $F \subset G$ be finite and $\epsilon>0$. Because $G/H$ is amenable, Theorem \ref{thm:tiling} implies there exists a measurable function $J: Y \to G/H$ such that if 
$$Y_0=\{ y\in Y:~ J( fy)=fJ(y) ~\forall f\in F\}$$
then $\nu(Y_0) \ge 1-\epsilon.$ Let $\sigma:G/H \to G$ be a section (i.e., $\sigma(gH) \in gH$ for all $g\in G$). Let $\tilde{J}:Y \to G$ be defined by $\tilde{J} = \sigma J$. 

Recall that $G \cc^a (X,\mu)$ is a p.m.p. action, $Z = \{z \in X^G:~ a(h^{-1}) z(g)=z(gh) \}$ and $G$ acts on $Z$ by $(gz)(f)=z(g^{-1}f)$ for $z \in Z, g,f \in G$. This action is $\CInd_H^G(a|H)$. It preserves the measure $\zeta$ on $Z$ obtained by pulling back the product measure $\mu^{G/H}$ on $X^{G/H}$ under the map $\Phi:Z\to X^{G/H}$, $\Phi(z)(gH)=z(\sigma(gH))$.



For $(z,y) \in Z \times Y$, define $S_y(z) \in X$ by
$$S_y(z) = a(\tilde{J}(y)) z(\tilde{J}(y)).$$

\begin{lem}
The map $(z,y) \in Z \times Y \mapsto S_y(z) \in X$ maps $\zeta \times \nu$ onto $\mu$.
\end{lem}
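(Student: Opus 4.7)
The plan is to show the statement for each fixed $y \in Y$ first, then integrate over $y$ using Fubini. The key observation is that for fixed $y$, the map $z \mapsto S_y(z)$ factors as a coordinate projection followed by a measure-preserving automorphism of $(X,\mu)$.

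First, I fix $y \in Y$ and set $c = J(y) \in G/H$ and $\tilde c = \tilde J(y) = \sigma(c) \in G$, so that $\tilde c$ is the chosen representative of the coset $c$. Then by definition
$$S_y(z) = a(\tilde c) z(\tilde c).$$
Since $\tilde c = \sigma(c)$, the evaluation $z \mapsto z(\tilde c)$ coincides with the $c$-coordinate of $\Phi(z)$, i.e., $z(\tilde c) = \Phi(z)(c)$. By construction of $\zeta$, the map $\Phi:(Z,\zeta)\to (X^{G/H},\mu^{G/H})$ is measure-preserving, so the distribution of $z\mapsto z(\tilde c)$ under $\zeta$ is the pushforward of $\mu^{G/H}$ under the $c$-th coordinate projection, which is $\mu$.

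Next, I post-compose with $a(\tilde c)\in\Aut(X,\mu)$. Since $a$ is a p.m.p.\ action, $a(\tilde c)_*\mu=\mu$, and therefore the map $z\mapsto S_y(z) = a(\tilde c)z(\tilde c)$ pushes $\zeta$ forward to $\mu$. That is, for every measurable $A\subset X$,
$$\zeta\bigl(\{z\in Z : S_y(z)\in A\}\bigr) = \mu(A).$$

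Finally, I integrate in $y$. For any measurable $A\subset X$, Fubini's theorem (applied to the indicator of $S^{-1}(A)\subset Z\times Y$, which is jointly measurable since $J$ and $\tilde J$ are measurable and the action map is measurable) gives
$$(\zeta\times\nu)\bigl(S^{-1}(A)\bigr) = \int_Y \zeta\bigl(\{z : S_y(z)\in A\}\bigr)\,d\nu(y) = \int_Y \mu(A)\,d\nu(y) = \mu(A),$$
as desired. There is no real obstacle here; the only subtle point is recognizing that $\tilde J(y)=\sigma(J(y))$ is always a coset representative of the form used to define $\Phi$, so that evaluation at $\tilde J(y)$ corresponds to a coordinate of $\Phi(z)$ and is therefore $\mu$-distributed under $\zeta$.
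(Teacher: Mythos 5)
Your proof is correct and follows the same route as the paper: the paper's proof simply observes that for each fixed $y$ the map $z\mapsto S_y(z)$ pushes $\zeta$ onto $\mu$ and then integrates over $y$. You have merely spelled out the ``easy to see'' step — that evaluation at $\tilde J(y)=\sigma(J(y))$ is a coordinate of $\Phi(z)$, hence $\mu$-distributed under $\zeta$, and post-composition with the measure-preserving map $a(\tilde J(y))$ keeps it so — which is exactly the intended justification.
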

\begin{proof}
For any $y\in Y$, if $\delta_y$ denotes the Dirac probability measure concentrated on $y$ then it is easy to see that $(z,y) \mapsto S_y(z)$ maps $\zeta \times \delta_y$ onto $\mu$. The lemma follows by integrating over $y$.
\end{proof}

\begin{lem}
For every $(z,y) \in Z\times Y_0$ and $f\in F$, $S_{fy}(fz) = a(f)S_y(z)$.
\end{lem}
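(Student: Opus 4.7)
The plan is a straightforward unfolding of definitions, with the only nontrivial input being the cocycle-style relation that defines membership in $Z$.

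First, I would compute both sides directly. By definition,
\[
S_{fy}(fz) = a(\tilde J(fy))\,(fz)(\tilde J(fy)) = a(\tilde J(fy))\,z\bigl(f^{-1}\tilde J(fy)\bigr),
\]
while
\[
a(f)S_y(z) = a(f)\,a(\tilde J(y))\,z(\tilde J(y)) = a\bigl(f\tilde J(y)\bigr)\,z(\tilde J(y)).
\]
So the problem reduces to identifying $a(\tilde J(fy))\,z(f^{-1}\tilde J(fy))$ with $a(f\tilde J(y))\,z(\tilde J(y))$.

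The key step uses that $y\in Y_0$: by definition of $Y_0$, $J(fy)=fJ(y)$ in $G/H$. Hence $\tilde J(fy)=\sigma(fJ(y))$ and $f\tilde J(y)=f\sigma(J(y))$ both lie in the coset $fJ(y)\subset G$, so there exists $h\in H$ with
\[
\tilde J(fy) = f\,\tilde J(y)\,h.
\]
Then $f^{-1}\tilde J(fy)=\tilde J(y)\,h$, and the defining property of $Z$ (namely $z(gh)=a(h^{-1})z(g)$ for all $g\in G$, $h\in H$) gives
\[
z\bigl(f^{-1}\tilde J(fy)\bigr) = z(\tilde J(y)h) = a(h^{-1})\,z(\tilde J(y)).
\]

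Plugging this in and using that $a$ is a homomorphism,
\[
S_{fy}(fz) = a\bigl(f\tilde J(y)h\bigr)\,a(h^{-1})\,z(\tilde J(y)) = a\bigl(f\tilde J(y)\bigr)\,z(\tilde J(y)) = a(f)\,S_y(z),
\]
which is the desired identity. There is no real obstacle; the only point requiring attention is being careful that $\tilde J$ is not a homomorphism to $G$ (only $J$ to $G/H$ respects the action on $Y_0$), which is precisely what forces the appearance of the correcting element $h\in H$ and invokes the $Z$-equivariance relation.
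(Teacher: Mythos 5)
Your proof is correct and follows essentially the same route as the paper's: both extract the element $h\in H$ with $\tilde J(fy)=f\tilde J(y)h$ from the condition $J(fy)=fJ(y)$ on $Y_0$, and both use the defining relation $z(gh)=a(h^{-1})z(g)$ of $Z$ to absorb $h$. The only difference is presentational (you compute the two sides separately rather than in one chain of equalities).
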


\begin{proof}
If $y \in Y_0$ then $J(fy) = fJ(y)$ for all $f\in F$. Therefore, for each $f\in F$ there is some $h\in H$ such that $\tilde{J}(fy)=f\tilde{J}(y)h$. Now
\begin{eqnarray*}
S_{fy}(fz) &=& a(\tilde{J}(fy))(fz)(\tilde{J}(fy))= a(f\tilde{J}(y)h)(fz)(f\tilde{J}(y)h)\\
&=& a(f)a(\tilde{J}(y))a(h)z(\tilde{J}(y)h)= a(f)a(\tilde{J}(y))z(\tilde{J}(y)) = a(f)S_y(z).
\end{eqnarray*}
\end{proof}

Now let $\phi: X \to \N$ be an observable. Define $\psi: Z\times Y \to \N$ by $\psi(z,y) = \phi( S_y(z))$. The lemma above implies that for all $(z,y) \in Z \times Y_0$, $\psi(fz, fy) = \phi (a(f) S_y(z))$ for all $f\in F$. Thus $\psi^F(z,y) = \phi^F(S_y(z))$ for $(z,y) \in Z \times Y_0$. Since $(z,y) \mapsto S_y(z)$ takes the measure $\zeta \times \nu$ to $\mu$ and $\nu(Y_0) \ge 1-\epsilon$, it follows that 
$$\|\psi^F_*(\zeta \times \nu)-\phi^F_*\mu\|_1 < \epsilon.$$
Because $F\subset G$, $\epsilon>0$ and $\phi$ are arbitrary, this implies Theorem \ref{thm:main}.

\section{Consequences of Theorem \ref{thm:main}} \label{sec:conseq}

If $K$ is a group acting on a countable set $T$, then for a measure space $(X,\mu )$ we denote the generalized shift action of $K$ on $(X^T, \mu ^T)$ (given by $(ky)(t)=y(k^{-1}t)$ for $k\in K, y\in X^T$, $t\in T$) by $s_{K,T,X}$.  

\begin{cor}\label{cor:cind}
Let $G$ be a countable group and let $H$ be a normal subgroup of infinite index such that $G/H$ is amenable. Then $a\prec \mbox{CInd}_H^G((a\times \tau _G)|H)$ for every p.m.p. action $a$ of $G$.
\end{cor}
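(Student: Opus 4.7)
The plan is to apply Theorem \ref{thm:main} with a carefully chosen free action $b$ of $G/H$, namely the Bernoulli shift $s_{G/H,G/H,X}$ of $G/H$ on $(X,\mu)^{G/H}$ where $(X,\mu)$ is a standard non-atomic probability base. Since $|G/H|=\infty$, this action is free and p.m.p., so Theorem \ref{thm:main} immediately gives
\[
a \prec \CInd_H^G(a|H) \times b_G.
\]
The remaining work is to identify this product with $\CInd_H^G((a\times \tau_G)|H)$.

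The first identification I would make is $b_G \cong \CInd_H^G(\tau_G|H)$, which is already noted in the introduction. Indeed, when the action being co-induced is trivial, the co-induction space $Z = \{z \in X^G : z(gh) = z(g)\ \forall h\in H\}$ consists precisely of functions constant on left $H$-cosets, and the map $z \mapsto (gH \mapsto z(g))$ identifies $(Z,\zeta)$ equivariantly and measure-preservingly with the generalized shift $s_{G,G/H,X}$, which by construction is $b_G$.

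Next I would verify that co-induction distributes over products of $H$-actions: for any p.m.p. actions $c,d$ of $H$,
\[
\CInd_H^G(c\times d) \;\cong\; \CInd_H^G(c) \times \CInd_H^G(d).
\]
This is a direct unpacking of definitions: a point $z$ of the $(c\times d)$ co-induction space, valued in $X_c\times X_d$, splits into a pair $(z_c, z_d)$ for which the equivariance condition separates coordinate-wise; and the product measure $(\mu_c\times \mu_d)^{G/H}$, pulled back through $\Phi$, factors as the product of the pulled-back measures on the individual co-induction spaces. Applying this with $c=a|H$ and $d=\tau_G|H$, and using $(a\times\tau_G)|H = (a|H)\times(\tau_G|H)$ together with the previous paragraph, yields
\[
\CInd_H^G((a\times\tau_G)|H) \;\cong\; \CInd_H^G(a|H) \times \CInd_H^G(\tau_G|H) \;\cong\; \CInd_H^G(a|H) \times b_G,
\]
and combining with the conclusion of Theorem \ref{thm:main} finishes the proof. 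There is no genuine obstacle here; the entire content is in the two functorial identifications above, both of which follow formally from the definition of co-induction and the behavior of product measures under pullback.
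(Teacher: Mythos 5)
Your proposal is correct and follows essentially the same route as the paper: apply Theorem \ref{thm:main} with $b$ the Bernoulli shift of $G/H$, identify $b_G$ with $\CInd_H^G(\tau_G|H)$ (the paper cites Proposition A.2 of \cite{Ke11} for this where you unpack it directly), and use the distributivity of co-induction over products, which is exactly the paper's Lemma \ref{lem:prod}.
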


\begin{proof}
Let $(X,\mu)$ be a standard non-atomic probability space. Let $s_{G/H,G/H,X}$ denote the shift of $G/H$ on $X^{G/H}$, which is free.  Let $s_{G,G/H ,X}$ denote the generalized shift of $G$ on $X^{G/H}$.  Then $s_{G,G/H,X}$ is the action of $G$ induced by $s_{G/H,G/H,X}$, i.e., $s_{G,G/H ,X}$ factors through $s_{G/H , G/H , X}$.  By Proposition A.2 of \cite{Ke11} we have that $s_{G,G/H,X} \cong \mbox{CInd}_H^G (s_{H,H/H,X})$. Now $s_{H,H/H,X} = \tau _H$ is just the identity action of $H$ on $X$, and $\tau _H = \tau _G|H$ is the restriction of the identity action of $G$ on $X$ to $H$.

\begin{lem}\label{lem:prod} Let $L$ be a subgroup of the countable group $K$. Let $a,b \in A(L ,X,\mu )$. Then
\[
\mbox{CInd}_L ^K (a) \times \mbox{CInd}_L ^K (b) \cong \mbox{CInd}_L ^K(a\times b)
\]
\end{lem}

\begin{proof}
This is easy to see once we view $\mbox{CInd}_L^K (a)$ as an action on the space $(X^{K/L}, \mu ^{K/L})$ (using the bijection $\Phi :Z\to X^{K/L}$ defined in section \ref{sec:intro}), and similarly view $\mbox{CInd}_L^K (b)$ and $\mbox{CInd}_L^K(a\times b)$ as actions on $(X ^{K/L}, \mu ^{K/L})$ and $((X\times X)^{K/L} , (\mu \times \mu ) ^{K/L} )$ respectively.
\end{proof}

Applying Theorem \ref{thm:main} we now obtain
\begin{align*}
a\prec \mbox{CInd}_H^G (a|H)\times s_{G,G/H ,X} &\cong \mbox{CInd}_H^G (a|H) \times \mbox{CInd}_H^G (\tau _G|H) \cong \mbox{CInd}_H^G ((a\times \tau _G)|H),
\end{align*}
so $a \prec \mbox{CInd}_H^G((a\times \tau_G)|H )$. 
\end{proof}

If in addition to the hypotheses in corollary \ref{cor:cind} we also have $(a\times \tau _G)|H \prec a|H$, then since co-inducing preserves weak containment (A.1 of \cite{Ke11}) it will follow that 
\[
a\prec \mbox{CInd}_H^G ((a\times \tau _G)|H) \prec \mbox{CInd}_H^G (a|H).
\]
Recall that a p.m.p. action $a$ of $G$ on a standard non-atomic probability space is called a \emph{universal} action of $G$ if $b\prec a$ for every p.m.p. action $b$ of $G$. We now have the following.

\begin{cor}\label{cor:a|H}
Let $G$ be a countable group and let $H$ be a normal subgroup of infinite index such that $G/H$ is amenable. Then any one of the following conditions on $a\in A(G,X,\mu )$ implies $a\prec\mbox{CInd}_H^G(a|H)$:
\begin{enumerate}
\item $a$ is ergodic but not strongly ergodic;
\item $a|H$ is ergodic but not strongly ergodic;
\item $a$ is a universal action of $G$;
\item $a|H$ is a universal action of $H$;
\end{enumerate}
In addition, the set of actions $a$ of $G$ for which $a\prec\mbox{CInd}_H^G(a|H)$ is closed under taking products.
\end{cor}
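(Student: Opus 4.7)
The plan is to reduce the four conditions to a single statement and then invoke Corollary \ref{cor:cind}. Specifically, by Corollary \ref{cor:cind} we have $a\prec \CInd_H^G((a\times \tau_G)|H)$, and the discussion immediately preceding the corollary points out that co-induction preserves weak containment (A.1 of \cite{Ke11}). Therefore it suffices to show, in each of the four cases, that
\[
(a\times \tau_G)|H \prec a|H,
\]
since this will upgrade to $\CInd_H^G((a\times\tau_G)|H)\prec \CInd_H^G(a|H)$ and chain with the corollary to give $a\prec \CInd_H^G(a|H)$. Note that $(a\times \tau_G)|H = (a|H)\times \tau_H$, so the target is really $(a|H)\times \tau_H \prec a|H$.

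Now I would handle the four conditions. For (1), $a$ ergodic but not strongly ergodic: by the Abert--Weiss result cited in the introduction \cite{AW11}, $a\times \tau_G \prec a$ as $G$-actions; restricting observables (and test sets $F$) to $H$ yields $(a\times\tau_G)|H \prec a|H$, since weak containment is trivially preserved by restriction to a subgroup. For (2), $a|H$ ergodic but not strongly ergodic as an $H$-action: apply \cite{AW11} directly to the $H$-action $a|H$ to obtain $(a|H)\times\tau_H \prec a|H$. For (3), $a$ universal for $G$: every $G$-action, in particular $a\times\tau_G$, is weakly contained in $a$, so $a\times\tau_G\prec a$ and we restrict to $H$. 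For (4), $a|H$ universal for $H$: by universality $(a|H)\times\tau_H\prec a|H$ directly. In each case the reduction above completes the argument.

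For the closure statement, suppose $a_1,a_2\in A(G,X,\mu)$ both satisfy $a_i \prec \CInd_H^G(a_i|H)$. Weak containment is preserved by products (if $c_i\prec d_i$ then $c_1\times c_2\prec d_1\times d_2$, since product observables can be approximated coordinatewise), so
\[
a_1\times a_2 \prec \CInd_H^G(a_1|H)\times \CInd_H^G(a_2|H) \cong \CInd_H^G\bigl((a_1\times a_2)|H\bigr),
\]
where the last isomorphism is Lemma \ref{lem:prod} together with the identity $(a_1|H)\times (a_2|H)=(a_1\times a_2)|H$. This gives the desired closure.

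The only nontrivial input is the Abert--Weiss fact invoked in (1) and (2), and this is already cited and used in the surrounding discussion; everything else is a formal manipulation combining Corollary \ref{cor:cind}, Lemma \ref{lem:prod}, stability of weak containment under restriction, products, and co-induction. I expect no real obstacle beyond keeping track of which group's trivial action ($\tau_G$ versus $\tau_H$) appears where.
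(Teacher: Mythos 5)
Your proposal is correct and follows essentially the same route as the paper: reduce each condition to $(a\times\tau_G)|H\prec a|H$, chain with Corollary \ref{cor:cind} via the fact that co-induction preserves weak containment, and use Lemma \ref{lem:prod} for closure under products. The only difference is that you spell out the restriction-to-$H$ step and the $\tau_G$ versus $\tau_H$ bookkeeping explicitly, which the paper leaves implicit.
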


\begin{remark}
The referee points out that condition \emph{2} is in fact strictly stronger than condition \emph{1}. That is, if $G/H$ is amenable then $a|H$ being ergodic but not strongly ergodic implies that $a$ itself is not strongly ergodic. This is a special case of \cite{Io10} lemma 2.3.
\end{remark}

\begin{proof}[Proof of \ref{cor:a|H}]
\emph{3} and \emph{4} are immediate from corollary \ref{cor:cind}, and \emph{1} and \emph{2} follow from \ref{cor:cind} along with Theorem 3 of \cite{AW11} where they show that $a\times \tau _G \prec a$ holds for ergodic $a$ that are not strongly ergodic. The last statement follows from \ref{lem:prod} since if $a\prec\mbox{CInd}_H^G(a|H)$ and $b\prec\mbox{CInd}_H^G(b|H)$ then $a\times b \prec \mbox{CInd}_H^G(a|H)\times \mbox{CInd}_H^G(b|H)\cong \mbox{CInd}_H^G((a\times b)|H)$.
\end{proof}

We can now prove Theorem \ref{thm:univ}
\begin{proof}[Proof of \ref{thm:univ}]
Suppose $b$ is a universal action of $H$. Let $a$ be a universal action of $G$. It suffices to show that $a\prec\mbox{CInd}_H^G(b)$. We have $a|H\prec b$ by universality of $b$, and so by \emph{3} of Corollary \ref{cor:a|H} we have that $a\prec \mbox{CInd}_H^G(a|H ) \prec \mbox{CInd}_H^G(b)$.
\end{proof}


\begin{remark}The assumption that $G/H$ is amenable is in some cases necessary in order for $\mbox{CInd}_H^G$ to preserve universality. That is, there are examples of groups $H\leq G$ with $H$ infinite index in $G$ such that $G/H$ is \emph{not} amenable, and such that $a\mapsto \mbox{CInd}_H^G (a)$ does not map universal actions to universal actions. For example, if $H$ is any subgroup of infinite index in a group $G$ with property (T) (e.g., if $G=H\times K$ where both $H$ and $K$ are countably infinite with property (T)) then $\mbox{CInd}_H^G(b)$ is weak mixing for every $b\in A(H,X,\mu )$ (see \cite{Io08} lemma 2.2 (ii)), hence is \emph{never} universal. Another example is when $H$ is amenable and $G/H$ is non-amenable (e.g., if $G= H\times K$ where $H$ is any amenable group and $K$ is any non-amenable group).  This implies that $G$ is non-amenable. If $s=s_{H,H,X}$ is the shift of $H$ on $(X^H , \mu ^H)$ then $s$ is universal for $H$ since $H$ is amenable, but $\mbox{CInd}_H^G (s) \cong s_{G,G,X}$ is not universal since $G$ is non-amenable. 
\end{remark}

\begin{remark}
In case $H$ is finite index in $G$ then we actually have the following form of Theorem \ref{thm:main}. We do not assume that $H$ is normal in $G$. Let $b$ denote the action of $G$ on $G/H$, where we view $G/H$ as equipped with normalized counting measure $\nu$.  Then for any p.m.p. action $a$ of $G$ on a standard non-atomic probability space $(X,\mu )$, $a$ is a \emph{factor} of $\mbox{CInd}_H^G (a|H)\times b$.  One way to see this is to use the isomorphism $\mbox{CInd}_H^G(a|H)\cong a^{G/H}\circledast s_{G,G/H ,X}$ given by proposition A.3 of \cite{Ke11}. Here $a^{G/H}\circledast s_{G,G/H ,X}$ is the p.m.p. action of $G$ on $(X^{G/H},\mu ^{G/H})$ given by $a^{G/H}\circledast s_{G,G/H ,X}(g) = a^{G/H}(g) \circ s_{G,G/H,X}(g)$ (note that the transformations $a^{G/H}(g)$ and $s_{G,G/H , X}(g)$ commute for all $g\in G$). Then $(a^{G/H}\circledast s_{G,G/H ,X})\times b$ is an action on the space $(X^{G/H} \times G/H , \mu ^{G/H}\times \nu)$, and the map $(f,gH)\mapsto f(gH) \in X$ factors this action onto $a$.
\end{remark}

\section{Gaussian actions}\label{sec:gactions}

A (real) \emph{positive definition function} $\varphi : I\times I\ra \R$ on a countable set $I$ is a real-valued function satisfying $\varphi (i,j) = \varphi (j,i)$ and $\sum _{i,j\in F}a_ia_j\varphi (i,j) \geq 0$ for all finite $F\subseteq I$ and reals $a_i$, $i\in F$.

\begin{thm}\label{thm:gmeas}
If $\varphi :I\times I \ra \R$ is a real valued positive definite function on a countable set $I$, then there is a unique Borel probability measure $\mu _\varphi$ on $\R ^I$ such that the projection functions $p_i : \R ^I \ra \R$, $p_i (x) = x(i)$ ($i\in I$), are centered jointly Gaussian random variables with covariance matrix $\varphi$. That is, $\mu _\varphi$ is uniquely determined by the two properties
\begin{enumerate}
\item Every finite linear combination of the projection functions $\{ p_i \} _{i\in I}$ is a centered Gaussian random variable on $(\R ^I , \mu _\varphi )$;
\item $\E (p_i p_j) = \varphi (i,j)$ for all $i,j\in I$.
\end{enumerate}
\end{thm}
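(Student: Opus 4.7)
The plan is to apply Kolmogorov's extension theorem after constructing a consistent family of finite-dimensional centered Gaussian marginals. In the first stage I would, for each finite $F \subseteq I$, build a centered Gaussian measure $\mu_F$ on $\R^F$ with covariance matrix $\varphi_F := \varphi|_{F\times F}$. The hypothesis on $\varphi$ says exactly that every such $\varphi_F$ is a real symmetric positive semi-definite matrix, so $\xi \mapsto \exp(-\tfrac{1}{2}\xi^T \varphi_F \xi)$ is a continuous positive definite function on $\R^F$ (it is the pointwise limit of the nondegenerate Gaussian characteristic functions coming from $\varphi_F + \epsilon I$ as $\epsilon\downarrow 0$). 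Bochner's theorem then produces a unique Borel probability measure $\mu_F$ with this characteristic function, and differentiating twice at $0$ yields $\int p_i p_j \, d\mu_F = \varphi(i,j)$; any linear combination of the projections $p_i$ restricted to $F$ is centered Gaussian by direct inspection of its characteristic function. Equivalently, writing $\varphi_F = AA^T$ via Cholesky or spectral decomposition, $\mu_F$ can be realized as the pushforward of the standard Gaussian on $\R^{|F|}$ under $x \mapsto Ax$, which handles the degenerate case transparently.

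Next I would verify consistency: for $F \subseteq F'$, the pushforward of $\mu_{F'}$ under the coordinate projection $\R^{F'} \to \R^F$ has characteristic function obtained from that of $\mu_{F'}$ by setting the additional coordinates to zero, which is exactly the characteristic function of $\mu_F$; by the uniqueness clause in Bochner's theorem, this pushforward equals $\mu_F$. The Kolmogorov extension theorem then produces a unique Borel probability measure $\mu_\varphi$ on the Polish space $\R^I$ (with its product Borel structure) whose finite-dimensional marginal on each finite $F$ is $\mu_F$.

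In the last stage I would read off the two properties and uniqueness. Property (2) is immediate from the construction of $\mu_F$. Any finite linear combination $\sum_{i\in F} a_i p_i$ depends only on coordinates in the finite set $F$, so its law is determined by $\mu_F$ and is centered Gaussian with variance $\sum_{i,j\in F} a_i a_j \varphi(i,j)$, giving (1). For uniqueness of $\mu_\varphi$, any Borel probability measure on $\R^I$ is determined by its finite-dimensional marginals (the cylinder sets form a generating $\pi$-system for the Borel $\sigma$-algebra), and (1) together with (2) force each such marginal to be a centered Gaussian on $\R^F$ with covariance $\varphi_F$, hence equal to $\mu_F$ by Bochner uniqueness. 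The only mildly delicate point in the whole argument is the handling of singular $\varphi_F$, and this is managed either by the limiting Bochner argument above or by the $x\mapsto Ax$ pushforward description; no step constitutes a serious obstacle.
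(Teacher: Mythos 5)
Your proof is correct. Note that the paper does not actually prove this theorem: it is a standard fact quoted from the literature (the text refers the reader to Appendix C of \cite{Ke10}), so there is no in-paper argument to compare against. Your route --- Bochner (or the pushforward $x\mapsto Ax$ with $\varphi_F=AA^T$) for the finite-dimensional centered Gaussian marginals, Kolmogorov consistency and extension on the Polish space $\R^I$, and uniqueness via the Cram\'er--Wold observation that properties (1) and (2) pin down each finite-dimensional characteristic function as $e^{-\frac{1}{2}\xi^T\varphi_F\xi}$ --- is exactly the standard construction, and you handle the only delicate point (singular $\varphi_F$) correctly. The one step worth making explicit is that in the uniqueness argument property (2) is what converts ``centered Gaussian'' into ``centered Gaussian with variance $\sum_{i,j}a_ia_j\varphi(i,j)$'' for the linear combination $\sum_i a_ip_i$, which is legitimate since Gaussian variables are square-integrable; you use this implicitly and it is fine.
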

For a finite $F\subseteq I$, let $p _F : \R ^I\ra \R ^F$ be the projection $p _F (x) = x|F$. Then $\mu _\varphi$ can also be characterized as the unique Borel probability measure on $\R ^I$ such that for each finite $F\subseteq I$ the measure $(p _F)_*\mu _\varphi$ on $\R ^F$ has characteristic function
\[
\widetilde{(p_F) _*\mu _\varphi }(u)= e^{-\tfrac{1}{2}\sum _{i,j\in F}u_iu_j\varphi (i,j)} .
\]
We call $\mu _\varphi$ the \emph{Gaussian measure associated to $\varphi$} and $(\R ^I , \mu _\varphi )$ a \emph{Gaussian probability space}.  A discussion of this can be found in \cite{Ke10} Appendix C and the references therein.
\\
\indent Let $G$ be a countable group acting on $I$ and suppose that the positive definite function $\varphi : I\times I \ra \R$ is invariant for the action of $G$ on $I$, i.e., $\varphi (g\cdot i, g\cdot j) = \varphi (i,j)$ for all $g\in G$, $i,j\in I$.  Let $s_\varphi$ denote the shift action of $G$ on $(\R ^I , \mu _\varphi )$
\[
(s_\varphi (g)x)(i) = x(g^{-1}\cdot i).
\]
Then invariance of $\varphi$ implies that $\mu _\varphi$ is an invariant measure for this action. We call $s_\varphi$ the \emph{Gaussian shift associated to $\varphi$}. 
\\
\indent Let $\pi$ be an orthogonal representation of $G$ on a separable real Hilbert space $\cH _\pi$, and let $T\subseteq \cH _\pi$ be a countable $\pi$-invariant set whose linear span is dense in $\cH _\pi$. Then $G$ acts on $T$ via $\pi$, and we let $\varphi _T: T\times T \ra \R$ be the $G$-invariant positive definite function given by $\varphi _T(t_1, t_2) = \langle t_1, t_2 \rangle$. We let $s_\pi = s_{\pi , T}$ be the corresponding Gaussian shift and call it the \emph{Gaussian shift action associated to $\pi$}.  It follows from proposition \ref{prop:gact} below that up to isomorphism this action does not depend on the choice of $T\subseteq \cH _\pi$.  For now, it is clear that an isomorphism $\theta$ of two representations $\pi _1$ and $\pi _2$ induces an isomorphism of the actions $s_{\pi _1, T}$ with $s_{\pi _2, \theta (T)}$.
\\
\indent By the GNS construction, every invariant real positive definite function $\varphi$ on a countable $G$-set may be viewed as coming from an orthogonal representation in this way.
\\
\indent There is another way of obtaining an action on a Gaussian probability space from an orthogonal representation of $G$. Consider the product space $(\R ^\N , \mu ^\N )$, where $\mu$ is the $N(0,1)$ normalized, centered Gaussian measure on $\R$ with density $\frac{1}{\sqrt{2\pi} }e^{-x^2 /2}$.  Let $p_n : \R ^\N \rightarrow\R$, $n\in \N$, be the projection functions $p_n(x)= x(n)$. The closed linear span $\langle p_n \rangle _{n\in \N} \subseteq L^2 (\R ^\N , \mu ^\N ,\R )$ has countable infinite dimension. Let $\cH = \langle p_n \rangle _{n\in\N } \subseteq L^2 (\R ^\N , \mu ^\N , \R )$ and let $\pi$ be a representation of $G$ on $\cH$.  Let $a(\pi )$ be the action of $G$ on $(\R ^\N , \mu ^\N )$ given by
\[
(a(\pi ) (g)x)(n) = \pi (g^{-1})(p_n)(x).
\]
This preserves the measure $\mu ^\N$ by the characterization of $\mu ^\N$ given in \ref{thm:gmeas} since $\mu ^\N = \mu _\varphi$, where $\varphi :\N \times \N \ra \R$ is the positive definite function given by $\varphi (n,n) =1$ and $\varphi (n,m)= 0$ for $n\neq m$.
\\
\indent It follows from the discussion in \cite{Ke10} Appendix E that if $\pi _1$ and $\pi _2$ are isomorphic, then $a(\pi _1)\cong a(\pi _2)$.  So if $\pi$ is now an arbitrary orthogonal representation of $G$ on an infinite dimensional separable real Hilbert space $\cH _{\pi}$, then by choosing an isomorphism $\theta$ of $\cH _\pi$ with $\cH =\langle p_n \rangle _{n\in \N}$ we obtain an isomorphic copy $\theta \cdot \pi$ of $\pi$, on $\cH$, and the corresponding action $a(\theta \cdot \pi )$ is, up to isomorphism, independent of $\theta$.
\\
\indent The construction of the actions $a(\pi )$ also works for representations on a finite dimensional Hilbert space, replacing $\N$ above with $N = \mbox{dim}(\mc{H}_\pi )$.  The following proposition also holds in the finite dimensional setting.

\begin{prop}\label{prop:gact}
Let $\pi$ be an orthogonal representation of $G$ on $\cH = \langle p_n \rangle _{n\in \N} \subseteq L^2 (\R ^\N , \mu ^\N , \R )$, let $T\subseteq \cH$ be a countable $\pi$-invariant set of functions in $\mc{H}$ whose linear span is dense in $\cH$, and let $s_{\pi , T}$ be the corresponding Gaussian shift on $(\R ^T , \mu _{\varphi _T})$.  Then the map $\Phi :(\R ^\N , \mu ^\N ) \ra (\R ^T , \mu _{\varphi _T} )$ given by
\[
\Phi (x)(t) = t(x)
\]
is an isomorphism of $a(\pi )$ with $s_{\pi ,T}$.  In particular, up to isomorphism, the action $s_{\pi ,T}$ does not depend on the choice of $T$.
\end{prop}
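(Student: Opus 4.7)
The plan is to verify in turn that $\Phi$ (i) pushes $\mu ^\N$ forward to $\mu _{\varphi _T}$, (ii) intertwines the $G$-actions, and (iii) is invertible mod null sets. For (i), by the uniqueness part of Theorem \ref{thm:gmeas} it is enough to show that under $\Phi _*\mu ^\N$ the coordinate projections $q_t :\R ^T \ra \R$, $q_t(y)=y(t)$, form a centered jointly Gaussian family with covariance $\varphi _T$. But $q_t\circ \Phi =t$ as a function on $\R ^\N$, and each $t\in T\subseteq \mc{H} =\langle p_n\rangle _{n\in \N}$ is an $L^2(\mu ^\N )$-limit of finite linear combinations of the i.i.d.\ standard Gaussians $p_n$; hence each $t$, and every finite linear combination of the $t$'s, is a centered Gaussian, with covariance $\E (t_1t_2) = \langle t_1,t_2\rangle = \varphi _T(t_1,t_2)$.

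For (ii), the definition of $a(\pi )$ says exactly that $p_n \circ a(\pi )(g) = \pi (g^{-1})p_n$ pointwise on $\R ^\N$. By linearity this identity extends to every finite linear combination of the $p_n$'s, and both sides of the identity $t\circ a(\pi )(g) = \pi (g^{-1})t$ are $L^2(\mu ^\N )$-continuous in $t\in \mc H$ (the left side because $a(\pi )(g)$ is measure preserving, the right side because $\pi (g^{-1})$ is an isometry). Since span$(T)\supseteq $ span$\{p_n\}$ is $L^2$-dense in $\mc H$, the identity passes to every $t\in T$, and throwing away a single $\mu ^\N$-null set handles the countable family of identities indexed by $(t,g)\in T\times G$ simultaneously. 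For such $x$ and every $g\in G$, $t\in T$,
\[
\Phi (a(\pi )(g)x)(t)=t(a(\pi )(g)x)=(\pi (g^{-1})t)(x)=\Phi (x)(\pi (g^{-1})t)=(s_{\pi ,T}(g)\Phi (x))(t),
\]
which is the required intertwining.

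For (iii), it suffices to show that, mod $\mu ^\N$-null sets, the $\sigma$-algebra $\Phi ^{-1}(\mc B(\R ^T))$ equals the full Borel $\sigma$-algebra of $\R ^\N$, since then the measure-preserving map $\Phi$ between standard probability spaces is a mod-null isomorphism. Because span$(T)$ is dense in $\mc H$, each projection $p_n$ is an $L^2(\mu ^\N )$-limit of linear combinations $\sum c_i^{(k)}t_i^{(k)}=\Phi ^*\bigl(\sum c_i^{(k)}q_{t_i^{(k)}}\bigr)$; passing to an a.e.-convergent subsequence exhibits $p_n$ as a $\Phi ^{-1}(\mc B(\R ^T))$-measurable function (mod null sets). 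Since the $p_n$'s generate the Borel $\sigma$-algebra on $\R ^\N$, we are done, and the last sentence of the proposition follows at once. The only mildly delicate point is organizing step (ii) so that a single null set suffices for all pairs $(t,g)$; once one observes that $T$ and $G$ are countable, this is routine.
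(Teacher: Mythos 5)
Your proof is correct and follows essentially the same route as the paper's: push the measure forward via the Gaussian characterization of $\mu _{\varphi _T}$, verify equivariance by extending the defining identity $p_n\circ a(\pi )(g)=\pi (g^{-1})p_n$ from the $p_n$'s to all of $\cH$, and deduce invertibility from the fact that the functions $t\in T$ generate the full Borel $\sigma$-algebra mod $\mu ^\N$-null sets. One harmless misstatement: in step (ii) you assert $\mathrm{span}(T)\supseteq \mathrm{span}\{p_n\}$, which need not hold; what you actually use (and what is true) is that $\mathrm{span}\{p_n\}$ is dense in $\cH$ and $T\subseteq \cH$, so the identity extends to each $t\in T$ by $L^2$-continuity.
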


\begin{proof}
Note that up to a $\mu ^\N$-null set, $\Phi$ does not depend on the choice of representatives for the elements of $T$ (viewing each $t\in T$ as an equivalence class of functions in $L^2(\R ^\N , \mu ^\N ,\R )$). This follows from $T$ being countable. So $\Phi$ is well defined.
\\
\indent To see that $\Phi _*(\mu ^\N ) = \mu _{\varphi _T}$ we use \ref{thm:gmeas}. First, we show that if $f = \sum _{i=1}^k a_i p_{t_i}$ then $f$ has a centered Gaussian distribution with respect to $\Phi _*(\mu ^\N )$.  This is clear since $f_*\Phi _* (\mu ^\N ) = (f\circ \Phi )_*(\mu ^\N )$, and $f\circ \Phi =\sum _{i=1}^k a_i t_i$ has centered Gaussian distribution with respect to $\mu ^\N$ by virtue of being in $\cH$.
\\
\indent Second, we show that the covariance matrix of the projections $\{ p_t \} _{t\in T}$ with respect $\Phi _*\mu ^\N$ is equal to $\varphi _T$. We have
\begin{align*}
\int p_{t_1}(x) p_{t_2}(x) \, d(\Phi _*\mu ^\N ) &= \int \Phi (x)(t_1)\Phi (x)(t_2) \, d(\mu ^\N ) \\
&= \int t_1 t_2 \, d(\mu ^\N ) = \langle t_1 ,t_2 \rangle = \varphi (t_1,t_2) .
\end{align*}
Next, we show that $\Phi$ takes the action $a_\pi $ to the action $s_{\pi ,T}$. We have, for $\mu ^\N$-a.e. $x$,
\begin{align*}
\Phi (a(\pi )&(g)x)(t) = t(a(\pi )(g)x) = \textstyle\sum _n \langle t, p_n \rangle p_n (a(\pi )(g)x)  = \textstyle\sum _n \langle t, p_n \rangle \pi (g ^{-1})(p_n)(x)\\
& = \pi (g ^{-1})(\textstyle\sum _n \langle t, p_n \rangle p_n)(x) = \pi (g ^{-1})(t)(x) = \Phi (x)(\pi (g^{-1})(t)) = s_{\pi ,T} (g)(\Phi (x))(t).
\end{align*}
It remains to show that $\Phi$ is 1-1 on a $\mu ^\N$-measure 1 set. Since the closed linear span of $\{ t \} _{t\in T}$ in $\mc{H}$ contains each $p_i$, it follows that the $\sigma$-algebra generated by $\{ t \} _{t\in T}$ is the Borel $\sigma$-algebra modulo $\mu ^\N$-null sets, so there is a $\mu ^\N$-conull set $B$ such that $\{ t|B\} _{t\in T}$ generates the Borel $\sigma$-algebra of $B$ and thus $\{ t|B \}$-separates points. It follows that $\Phi$ is 1-1 on $B$.
\end{proof}


\section{Induced representations and the proof of Theorem \ref{thm:gaussian}}

We begin by briefly recalling the induced representation construction. Let $H$ be a subgroup of the countable group $G$, and let $\sigma :G/H \ra G$ be a selector for the left cosets of $H$ in $G$ with $\sigma (H)=e$.  Let $\rho : G\times G/H \ra H$ be defined by $\rho (g , kH) = \sigma (gkH)^{-1}g\sigma (kH) \in H$. Then $\rho$ is a cocycle for the action of $G$ on $G/H$, i.e., $\rho (g_0g_1, kH) = \rho (g_0, g_1kH)\rho (g_1,kH)$.  (Note that this is the same as the cocycle $\rho$ defined in the proof of Lemma \ref{lem:prod}.)

Let $\pi$ be an orthogonal representation of $H$ on the real Hilbert space $\mc{K}$. For each $gH\in G/H$ let $\mc{K}_{gH} = \mc{K}\times \{ gH \} = \{ (\xi ,gH)\csuchthat \xi \in \mc{K} \}$ be a Hilbert space which is a copy of $\mc{K}$. Then the induced representation $\mbox{Ind}_H^G(\pi )$ of $\pi$ is the representation of $G$ on $\bigoplus _{g\in G/H}\mc{K}$, which we identify with the set of formal sums $\mc{K}' = \{ \sum _{gH\in G/H} (\xi _{gH}, gH) \in \sum _{gH\in G/H}\mc{K}_{gH} \csuchthat \sum _{gH\in G/H} ||\xi _{gH} ||_{\mc{K}}^2 <\infty \}$, that is given by
\[
g_0 \cdot (\xi _{gH}, gH) = (\rho (g_0 ,gH)\cdot \xi _{gH} , \, g_0gH) \in \mc{K}_{g_0gH}
\]
for $(\xi _{gH}, gH) \in \mc{K}_{gH}$, and extending linearly.

\begin{lem}\label{lem:commute}
Let $H$ be a subgroup of the countable group $G$. Then
\begin{enumerate}
\item $a(\pi |H) \cong a(\pi )|H$ for all orthogonal representations $\pi$ of $G$.
\item $\mbox{CInd}_H^G(a(\pi )) \cong a(\mbox{Ind}_H^G(\pi ))$ for all orthogonal representations $\pi$ of $H$.
\end{enumerate}
\end{lem}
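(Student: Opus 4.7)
Part \emph{1} is essentially tautological. The action $a(\pi )$ on $(\R^\N ,\mu^\N )$ is defined by the formula $(a(\pi )(g)x)(n)=\pi (g^{-1})(p_n)(x)$, which involves $\pi$ only through the individual operators $\pi (g^{-1})$. Restricting this formula to $g\in H$ gives exactly $a(\pi |H)$ on the same space. One only needs to note that the preservation of $\mu ^\N$ for $a(\pi |H)$ and the well-definedness of the construction do not require the full representation, so the identification $a(\pi )|H=a(\pi |H)$ holds on the nose.

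The main content is part \emph{2}. My plan is to use Proposition \ref{prop:gact} to realize both sides as concrete Gaussian shifts $s_{\sigma ,S}$ for cleverly chosen invariant sets $S$, and then compare the resulting $G$-spaces directly. First, choose a countable $H$-invariant set $T\subseteq \mc{K}_\pi$ with dense linear span (obtained, for instance, by closing a countable dense subset under $\pi (H)$ and rational linear combinations, then intersecting with the unit ball if desired). By Proposition \ref{prop:gact}, the $H$-action $a(\pi )$ is isomorphic to the Gaussian shift $s_{\pi ,T}$ on $(\R ^T ,\mu _{\varphi _T})$, where $\varphi _T(t_1,t_2)=\langle t_1,t_2\rangle$. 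Next, inside the Hilbert space $\mc{K}'=\bigoplus _{gH\in G/H}\mc{K}_{gH}$ of $\Ind _H^G(\pi )$, form the countable set
\[
T'=\{ (t,gH)\csuchthat t\in T,\ gH\in G/H\} .
\]
Because $T$ is $H$-invariant, the formula $g_0\cdot (t,gH)=(\rho (g_0,gH)\cdot t,g_0gH)$ shows $T'$ is $G$-invariant, and its linear span is dense in $\mc{K}'$. So by Proposition \ref{prop:gact} again, $a(\Ind _H^G(\pi ))\cong s_{\Ind _H^G(\pi ),T'}$ on $(\R ^{T'},\mu _{\varphi _{T'}})$.

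The key computation is then
\[
\varphi _{T'}((t,gH),(s,kH))=\langle (t,gH),(s,kH)\rangle _{\mc{K}'}=\delta _{gH,kH}\,\varphi _T(t,s),
\]
so the Gaussian measure $\mu _{\varphi _{T'}}$ on $\R ^{T'}\cong (\R ^T)^{G/H}$ is exactly the product measure $(\mu _{\varphi _T})^{G/H}$. Under the bijection $\Phi :Z\ra X^{G/H}$ from Section \ref{sec:intro} with $X=\R ^T$, this product measure is the co-induction measure $\zeta$; so $\CInd _H^G(s_{\pi ,T})$ and $s_{\Ind _H^G(\pi ),T'}$ live on the same probability space.

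It remains to check the two $G$-actions coincide. Unwinding the co-induction formula for $b(g_0)$ together with the $Z$-equivariance $a(\pi )(h^{-1})z(g')=z(g'h)$ and the definition of $\rho$, I would show
\[
\Phi (b(g_0)z)(gH)=s_{\pi ,T}\bigl(\rho (g_0,g_0^{-1}gH)\bigr)\Phi (z)(g_0^{-1}gH),
\]
so evaluating at $t\in T$ gives $\Phi (z)(g_0^{-1}gH)\bigl(\pi (\rho (g_0,g_0^{-1}gH)^{-1})t\bigr)$. On the other side, the shift $s_{\Ind _H^G(\pi ),T'}$ satisfies
\[
(s_{\Ind _H^G(\pi ),T'}(g_0)y)(t,gH)=y\bigl(\Ind _H^G(\pi )(g_0^{-1})(t,gH)\bigr)=y\bigl(\rho (g_0,g_0^{-1}gH)^{-1}\cdot t,\,g_0^{-1}gH\bigr),
\]
where the last equality uses the cocycle identity $\rho (g_0^{-1},gH)=\rho (g_0,g_0^{-1}gH)^{-1}$. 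Under the identification $\R ^{T'}=(\R ^T)^{G/H}$, these two expressions agree, completing the proof. The main obstacle is purely bookkeeping: carefully tracking the cocycle $\rho$ through both the co-induction formula and the induced-representation action on $T'$, so that the two shift descriptions meet at the same formula.
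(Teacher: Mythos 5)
Your proposal is correct and follows essentially the same route as the paper: realize both sides as Gaussian shifts indexed by $T\times G/H$, observe that the covariance $\varphi_{T'}$ is block-diagonal so that $\mu_{\varphi_{T'}}$ is the product measure $\mu_{\varphi_T}^{G/H}$, and match the two $G$-actions via the cocycle identity $\rho(g_0^{-1},gH)^{-1}=\rho(g_0,g_0^{-1}gH)$. The cocycle bookkeeping you outline checks out against the paper's formulas for $b$ and $c$.
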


\begin{proof}
The first statement is clear. For the second, let $T\subseteq \mc{K}$ be a total, countable subset of $\mc{K}$ that is invariant under $\pi$. 
Then $T\times G/H \subseteq \mc{K}'$ is a total, countable subset of $\mc{K}'$ that is invariant under $\mbox{Ind}_H^G(\pi )$.  
Let $\varphi : (T\times {G/H}) \times (T\times G/H) \ra \R$ be the inner product determined by
\[
\varphi ((t_1, g_1H),(t_2,g_2H)) = \langle (t_1,g_1H) , (t_2, g_2H)\rangle _{\mc{K}'} = 
\begin{cases}
\langle t_1, t_2 \rangle _{\mc{K}} & \mbox{ if }g_1H=g_2H \\
0 & \mbox{ if }g_1H\neq g_2H .
\end{cases}
\]
Then the Gaussian shift action corresponding to $\mbox{Ind}_H^G(\pi )$ is the action $b$ of $G$ on $(\R ^{T\times G/H} ,\mu _\varphi )$ given by 
\[
(b(g)\cdot x)((t,kH)) = x(g^{-1}\cdot (t,kH)) = x ( (\rho (g^{-1}, kH) \cdot t, g^{-1}kH )) .
\]
On the other hand, the Gaussian shift action corresponding to $\pi$ is the action $s_{\pi} \cong a(\pi)$ of $H$ on $(\R ^T , \mu _{\varphi _T})$ given by $(s_{\pi} (h)\cdot w)(t) = w(h^{-1}\cdot t)$, and where $\varphi _T : T\times T \ra \R$ is just the inner product $\varphi _T(t_1,t_2) = \langle t_1, t_2 \rangle _{\mc{K}}$.  The co-induced action $\mbox{CInd}_H^G (s_{\pi} )$ is isomorphic to the action $c$ of $G$ on $((\R ^T)^{G/H} , \mu _{\varphi}^{G/H} )$ given by $(c(g)\cdot y)(kH) = s_{\pi} (\rho (g^{-1} , kH)^{-1}) \cdot y(g^{-1}kH)$. Evaluating this at $t\in T$ gives
\[
(c(g)\cdot y)(kH)(t) = (s_{\pi} (\rho (g^{-1} , kH)^{-1})\cdot y(g^{-1}kH))(t) = y(g^{-1}kH)(\rho (g^{-1} , kH) \cdot t) .
\]
It follows that the bijection $\Psi : \R ^{T\times G/H}  \ra (\R ^T)^{G/H}$ given by $\Psi (x)(kH)(t) = x((t,kH))$ takes the action $b$ to the action $c$, and also takes the measure $\mu _{\varphi}$ to $\mu _{\varphi _T}^{G/H}$. So $b\cong c$ as was to be shown.
\end{proof}

If $\pi _1$ and $\pi _2$ are orthogonal representations of $G$ on $\mathcal{H}_1$ and $\mathcal{H}_2$ respectively, then we say $\pi_1$ is {\em weakly contained in $\pi_2$ in the sense of Zimmer} \cite{Zi84} and write $\pi _1 \prec _Z \pi _2$ if for all $v_1,\dots ,v_n \in \mc{H}_1$, $\epsilon >0$, and $F\subseteq G$ finite, there are $w_1,\dots ,w_n\in \mc{H}_2$ such that $|\langle \pi _1 (g)(v_i),v_j\rangle - \langle \pi _2 (g)(w_i),w_j\rangle | <\epsilon$ for all $g\in F$, $i,j\leq n$.

\begin{lem}\label{lem:zim}
$\pi _1 \prec _Z \pi _2 \Rightarrow a(\pi _1)\prec a(\pi _2)$.
\end{lem}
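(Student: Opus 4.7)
The plan is to verify $a(\pi_1) \prec a(\pi_2)$ directly from the observable definition of weak containment. Using Proposition \ref{prop:gact}, I identify $a(\pi_i)$ with the Gaussian shift $s_{\pi_i, T_i}$ on $(\R^{T_i}, \mu_{\varphi_{T_i}})$ for convenient countable, $\pi_i$-invariant, total sets $T_i \subseteq \cH_{\pi_i}$; in this model the coordinate functions $\{x(t): t \in T_i\}$ form an explicit jointly centered-Gaussian family whose covariance is the inner product on $\cH_{\pi_i}$, so finite-coordinate observables have distributions controlled by matrix coefficients of $\pi_i$.

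Given $\epsilon > 0$, a finite $F \subseteq G$, and an observable $\phi$ for $s_{\pi_1,T_1}$, I first reduce by a standard three-step approximation to the case $\phi(x) = \hat\phi(x(t_1), \ldots, x(t_k))$ for some nonzero $t_1, \ldots, t_k \in T_1$ and some step function $\hat\phi : \R^k \to \{0, 1, \ldots, M\}$ whose level sets are finite disjoint unions of open boxes: truncate $\phi$ to finitely many values, then approximate it by a cylinder function of finitely many $x(t)$'s, then approximate each level set in $\R^k$ by a finite union of open boxes. Each reduction absorbs only a controllable amount into $\|\phi^F_* \mu_{\varphi_{T_1}} - (\phi')^F_* \mu_{\varphi_{T_1}}\|_1$.

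Next, apply $\pi_1 \prec_Z \pi_2$ to the vectors $t_1, \ldots, t_k$, the finite set $F^{-1}F \subseteq G$, and a tolerance $\epsilon'$ to be chosen, obtaining $w_1, \ldots, w_k \in \cH_{\pi_2}$ with $|\langle \pi_1(g) t_i, t_j\rangle - \langle \pi_2(g) w_i, w_j\rangle| < \epsilon'$ for all $g \in F^{-1}F$ and $i,j \le k$. After enlarging $T_2$ to include the $\pi_2$-orbits of the $w_i$'s, set $\psi(x) = \hat\phi(x(w_1), \ldots, x(w_k))$. Both $\phi^F_* \mu_{\varphi_{T_1}}$ and $\psi^F_* \mu_{\varphi_{T_2}}$ are then pushforwards, under the coordinatewise map $\hat\phi_F : \R^{k|F|} \to \{0, \ldots, M\}^F$, of centered Gaussian measures $\mu_1, \mu_2$ on $\R^{k|F|}$ whose covariance matrices $(\langle \pi_1(g'g^{-1}) t_i, t_j\rangle)$ and $(\langle \pi_2(g'g^{-1}) w_i, w_j\rangle)$ agree entrywise up to $\epsilon'$.

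The main obstacle is to convert this entrywise closeness of covariances into $L^1$-closeness of pushforwards, despite $\hat\phi_F$ being a highly discontinuous map into a finite set. I will resolve this via the Portmanteau theorem: entrywise convergence of covariances forces uniform convergence of the characteristic functions $\exp(-\tfrac{1}{2} u^T C_i u)$ on compact sets, so $\mu_2 \to \mu_1$ weakly as $\epsilon' \to 0$; meanwhile, each of the finitely many level sets of $\hat\phi_F$ is a finite union of boxes in $\R^{k|F|}$ whose topological boundary is contained in a finite union of coordinate hyperplanes $\{z_{g,i} = c\}$, each of which is $\mu_1$-null because the $(g,i)$-marginal of $\mu_1$ is the non-degenerate Gaussian $\mathcal{N}(0, \|t_i\|^2)$. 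Thus every level set of $\hat\phi_F$ is a $\mu_1$-continuity set, and Portmanteau yields $\mu_2(\hat\phi_F^{-1}(\alpha)) \to \mu_1(\hat\phi_F^{-1}(\alpha))$ for each $\alpha$; summing over the finitely many $\alpha$ gives $\|\phi^F_* \mu_{\varphi_{T_1}} - \psi^F_* \mu_{\varphi_{T_2}}\|_1 < \epsilon$ once $\epsilon'$ is chosen small enough, completing the verification.
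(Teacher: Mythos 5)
Your argument is correct in substance, but note that it diverges from the paper only in the sense that the paper offers no argument at all: the proof of Lemma \ref{lem:zim} given there is a one-line citation to the remark following Theorem 11.1 of \cite{Ke10}, whereas you have reconstructed the fact from scratch. Your chain of reductions is sound: pass to a cylinder step observable in finitely many nonzero coordinates, transport those finitely many vectors via Zimmer weak containment, and compare the two pushforwards of centered Gaussian measures with entrywise-close covariance matrices through the same discontinuous map, using L\'evy continuity plus the Portmanteau theorem together with the observation that the relevant coordinate hyperplanes are null because each one-dimensional marginal is $\mathcal{N}(0,\|t_i\|^2)$ with $t_i\neq 0$ (coordinates with $t_i=0$ being discardable as a.s.\ constant); the quantifier order is also right, since the covariance matrix $C_1$ and the boxes are fixed before $\epsilon'$ is chosen. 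Two points deserve a cleaner statement but are not gaps: first, the covariance entries are $\langle \pi_1(f'f^{-1})t_i,t_j\rangle$ for $f,f'\in F$, so the Zimmer approximation should be requested on $FF^{-1}$ (or on $(F\cup F^{-1})^2$) rather than on $F^{-1}F$; second, the level sets of $\hat\phi$ cannot literally all be finite disjoint unions of open boxes, since they partition $\R^k$, so one value class must be taken as the complement of the others --- its topological boundary is still contained in the same finite union of coordinate hyperplanes, so the continuity-set argument applies to it unchanged. Compared with simply invoking \cite{Ke10}, your proof has the merit of being self-contained and of making explicit exactly where orthogonality of the representations and non-degeneracy of the Gaussian marginals enter, at the cost of the routine three-step approximation bookkeeping.
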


\begin{proof}
This is the remark after Theorem 11.1 of \cite{Ke10}.
\end{proof}

\begin{lem}\label{lem:ind}
Let $G$ be a countable group with a subgroup $H<G$. Suppose the action of $G$ on $G/H$ is amenable. Then $\pi \prec _Z \mbox{Ind}_H^G(\pi |H)$ for every orthogonal representation $\pi$ of $G$.
\end{lem}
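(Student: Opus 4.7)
The plan is to exploit the classical isomorphism
\[
\mbox{Ind}_H^G(\pi |H) \cong \pi \otimes \lambda _{G/H},
\]
where $\lambda _{G/H}$ is the quasi-regular representation of $G$ on $\ell ^2(G/H ,\R )$, together with the fact that amenability of the action $G \curvearrowright G/H$ is equivalent, by Hulanicki's theorem, to $\lambda _{G/H}$ admitting almost $G$-invariant unit vectors (i.e., $1_G \prec_Z \lambda _{G/H}$).

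First I would write down the isomorphism explicitly. Using the model in the excerpt, $\mbox{Ind}_H^G(\pi |H)$ lives on $\bigoplus _{gH \in G/H} \cK _{gH}$ with $\cK = \cH _\pi$, and the $G$-action is twisted by the cocycle $\rho (g,kH) = \sigma (gkH)^{-1}g\sigma (kH) \in H$. Define $U: \cH _\pi \otimes \ell ^2(G/H ,\R ) \to \bigoplus _{gH} \cK _{gH}$ on simple tensors by
\[
U(v \otimes \delta _{kH}) = (\pi (\sigma (kH))^{-1}v, \, kH).
\]
Since $\pi |H$ appears in the fiber, the cocycle identity for $\rho$ together with the fact that $\sigma (gkH) \cdot \rho (g,kH) = g\sigma (kH)$ shows that $U$ intertwines $\pi \otimes \lambda _{G/H}$ with $\mbox{Ind}_H^G(\pi |H)$; unitarity is clear because $\{\delta _{kH}\}$ is orthonormal and each $\pi (\sigma (kH))$ is orthogonal.

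Second, I would apply the standard almost-invariant-vector argument. Given $v_1,\dots ,v_n \in \cH _\pi$, finite $F \subseteq G$, and $\epsilon >0$, set $M = 1 + \max _i \|v_i\|^2$ and choose by amenability a unit vector $\xi \in \ell ^2(G/H ,\R )$ with $|\langle \lambda _{G/H}(g)\xi ,\xi \rangle - 1| < \epsilon / M$ for all $g \in F$. Put $w_i = U(v_i \otimes \xi ) \in \cH _{\mbox{Ind}_H^G(\pi |H)}$. Then
\[
\langle \mbox{Ind}_H^G(\pi |H)(g)w_i , w_j \rangle \;=\; \langle \pi (g)v_i , v_j \rangle \cdot \langle \lambda _{G/H}(g)\xi , \xi \rangle ,
\]
and by the choice of $\xi$ the right-hand side is within $\epsilon$ of $\langle \pi (g)v_i , v_j\rangle$ for all $g \in F$ and all $i,j \le n$, establishing $\pi \prec _Z \mbox{Ind}_H^G(\pi |H)$.

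The one step that requires care is the verification of the intertwining isomorphism $U$; this is essentially a bookkeeping exercise with the cocycle $\rho$, but one must be consistent with the convention for $\sigma$ used in the paper. Everything else is a direct computation, and the almost-invariant-vector step is standard. The argument works equally well in the real and complex settings, so the restriction to orthogonal representations causes no difficulty.
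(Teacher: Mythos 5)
Your proposal is correct and follows essentially the same route as the paper: identify $\mbox{Ind}_H^G(\pi|H)$ with $\pi\otimes\lambda_{G/H}$ via the section $\sigma$, and then tensor the $v_i$ with an almost-invariant unit vector of $\lambda_{G/H}$ supplied by amenability of the action on $G/H$. The only cosmetic differences are that you write the intertwiner in the inverse direction and make the $\epsilon/M$ bookkeeping explicit (the equivalence you invoke is the standard characterization of amenable actions via $1\prec_Z\lambda_{G/H}$ rather than Hulanicki's theorem per se, but this is the same fact the paper uses).
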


\begin{proof}
It is well known that the action of $G$ on $G/H$ being amenable is equivalent to the existence of a sequence $u_n$, $n\in \N$, of unit vectors in $l^2(G/H , \R )$ that are asymptotically invariant for the quasi-regular representation $\lambda _{G/H}$ of $G$ (given by $\lambda _{G/H}(g_0)(\delta _{g_1H})=\delta _{g_0g_1H}$ where $\delta _{gH} \in l^2(G/H ,\R )$ is the indicator of $\{ gH \}$).  This means that for every $g\in G$, $\langle \lambda _{G/H}(g)(u_n),u_n \rangle \ra 1$ as $n\ra \infty$.  

Let $\mc{K}$ be the Hilbert space of $\pi$. The representation $\mbox{Ind}_H^G(\pi |H)$ is isomorphic to $\pi \otimes \lambda _{G/H}$ (this is E.2.6 of \cite{BdlHV08}); an isomorphism is given by (extending linearly) the map that sends $(\xi ,gH) \in \mc{K}_{gH}$ to $\pi (\sigma (gH))(\xi ) \otimes \delta _{gH} \in \mc{K}\otimes l^2(G/H,\R )$.  Given now $v_1,\dots ,v_n \in \mc{K}$, $\epsilon >0$, and $F\subseteq G$ finite, we have that for all $N$ sufficiently large
\begin{align*}
&|\langle \pi (g)(v_i), v_j \rangle - \langle (\pi \otimes \lambda _{G/H})(g) (v_i \otimes u_N), v_j \otimes u_N\rangle |\\
 &=\left|\langle \pi (g)(v_i),v_j \rangle \left(1-\langle \lambda _{G/H}(g)(u_N), u_N\rangle \right) \right|  <\epsilon
\end{align*}
for each $g\in F$, $i,j\leq n$. So taking $w_i = v_i\otimes u_N$ for $N$ sufficiently large shows that $\pi \prec _Z \pi \otimes \lambda _{G/H}\cong \mbox{Ind}_H^G(\pi |H )$.
\end{proof}


\begin{proof}[Proof of Theorem \ref{thm:gaussian}]
Let $\pi$ be an orthogonal representation of $G$ such that $a\cong a(\pi )$.  Then $\pi \prec _Z \mbox{Ind}_H^G(\pi |H)$ by Lemma \ref{lem:ind}. Applying Lemma \ref{lem:zim} and then Lemma \ref{lem:commute} we obtain
\[
a(\pi )\prec a(\mbox{Ind}_H^G(\pi |H)) \cong \mbox{CInd}_H^G (a(\pi |H)) \cong \mbox{CInd}_H^G(a(\pi )|H). \qedhere
\]
\end{proof}

\begin{remark}
An alternative proof of Theorem \ref{thm:gaussian} can be given that uses probability theory. For a Gaussian shift action $s_\varphi$ on $(Y,\nu ) = (\R ^T ,\mu _\varphi )$ one may identify $\mbox{CInd}_H^G(s_{\varphi}|H)$ with the isomorphic action $b= s_\varphi ^{G/H}\circledast s_{G,G/H , Y}$ (see A.3 of \cite{Ke11}) on $(Y^{G/H}, \nu ^{G/H})$. Using an appropriate F\o lner sequence $\{ F_n \}$ for the action of $G$ on $G/H$ one defines the maps $p_n: Y^{G/H}\ra Y$, $p_n(w)=|F_n|^{-1/2}\sum_{x\in F_n}w(x)$, each factoring the action $s_\varphi ^{G/H}$ onto $s_\varphi$.  Then using arguments as in \cite{KT08} it can be shown that for cylinder sets $A\subseteq Y$, the sequence $p_n^{-1}(A)$, $n\in \N$, is asymptotically invariant for $s_{G,G/H,Y}$, from which it follows that $s_\varphi \prec b$.
\end{remark}


\begin{thebibliography}{10000000}
\bibitem[AE11]{AE11} M. Ab\'{e}rt and G. Elek, \textit{Dynamical properties of profinite actions}. arXiv:1005.3188

\bibitem[AW11]{AW11} M. Ab\'{e}rt and B. Weiss, \textit{Bernoulli actions are weakly contained in any free action}. arXiv:1103.1063v1.

\bibitem[BdlHV08]{BdlHV08} B. Bekka, P. de la Harpe, A. Valette, \textit{Kazhdan's property (T)}, New Mathematical Monographs, 11. Cambridge University Press, Cambridge, 2008.

\bibitem[Bo03]{Bo03} L. Bowen. \textit{Periodicity and packings of the hyperbolic plane}, Geom. Dedicata, 102 (2003), 
213--236.


\bibitem[GTW06]{GTW06} E. Glasner, J.-P. Thouvenot and B. Weiss, \textit{Every countable group has the weak Rohlin property}, Bull. London Math. Soc, 38(6) (2006), 932-936

\bibitem[Io08]{Io08} A. Ioana, \textit{Orbit inequivalent actions for groups containing a copy of $\F _2$}, arXiv:math/0701027v2

\bibitem[Io10]{Io10} A. Ioana, \textit{Non-orbit equivalent actions of $\F _n$}, Ann. Sci. \'{E}c. Norm. Sup\'{e}r. (4) 42 (2009), no. 4, 675–696.  

\bibitem[KT08]{KT08} A. Kechris and T. Tsankov, \textit{Amenable actions and almost invariant sets}, Proc. Amer. Math. Soc. 136 (2008), no. 2, 687–697.

\bibitem[Ke10]{Ke10} A. Kechris, \textit{Global aspects of ergodic group actions}. Mathematical Surveys and Monographs, 160. American Mathematical Society, Providence, RI, 2010. xii+237 
   
\bibitem[Ke11]{Ke11} A. Kechris, \textit{Weak containment in the space of actions of a free group}, Israel J. Math., to appear.
 
\bibitem[LS04]{LS04} A. Lubotzky and Y. Shalom, \textit{Finite representations in the unitary  dual and Ramanujan groups}, Contemp. Math., 347 (2004), 173--189. 


\bibitem[LZ03]{LZ03} A. Lubotzky and A. Zuk, \textit{On property $(\tau)$}, preprint, 2003. (posted in http://www.ma.huji.ac.il/\%alexlub/, 2003)


\bibitem[Ol85]{Ol85} J. M. Ollagnier, \textit{Ergodic theory and statistical mechanics}. Lecture Notes in Mathematics, 1115. Springer-Verlag, Berlin, 1985. vi+147 pp.

\bibitem[OW80]{OW80} D. S. Ornstein, and B. Weiss, \textit{Ergodic theory of amenable group actions. I. The Rohlin lemma}. Bull. Amer. Math. Soc. (N.S.) 2 (1980), no. 1, 161--164.


 \bibitem[Zi84]{Zi84} R. J. Zimmer. \textit{Ergodic theory and semisimple groups}. Monographs in Mathematics, 81. BirkhŠuser Verlag, Basel, 1984. 
\end{thebibliography}
\end{document}